\documentclass[11pt,letterpaper,reqno]{amsart}
\usepackage{graphicx}
\usepackage{amsmath}
\usepackage{amssymb}
\usepackage{amsfonts}
\usepackage{amsrefs}
\usepackage{enumerate}
\usepackage{cancel}
\usepackage[mathscr]{eucal}
\usepackage{mathtools}

\usepackage[usenames]{color}

\newcommand{\R}{\mathbb{R}}

\newcommand{\ol}{\overline}

\newcommand{\M}{\mathcal{M}}
\newcommand{\F}{\mathcal{F}}
\renewcommand{\L}{\mathcal{L}}
\newcommand{\Z}{\mathcal{Z}}

\newtheorem{thm}{Theorem}
\newtheorem{lemma}[thm]{Lemma}
\newtheorem{prop}[thm]{Proposition}

\newtheorem{conj}[thm]{Conjecture}
\newtheorem{question}[thm]{Question}

\theoremstyle{definition}
\newtheorem{definition}[thm]{Definition}

\theoremstyle{remark}
\newtheorem{remark}{Remark}

\newtheorem*{outline}{Outline}

\DeclareMathOperator{\Hess}{Hess}
\DeclareMathOperator{\tr}{tr}
\DeclareMathOperator{\Ric}{Ric}
\DeclareMathOperator{\capac}{cap}

\newcommand{\C}{\mathcal{C}}
\renewcommand{\M}{\mathcal{M}}

\DeclareMathOperator{\Div}{div}

\begin{document}

\title{Maximum capacity of Bartnik data and a generalization of static metrics}

\author{Jeffrey L. Jauregui}
\address{Dept. of Mathematics,
Union College, 807 Union St.,
Schenectady, NY 12308,
United States}
\email{jaureguj@union.edu}
\date{\today}

\maketitle 

\begin{abstract}
Inspired by R. Bartnik's mass minimization problem in general relativity, we investigate a dual problem of maximizing the capacity among asymptotically flat extensions (with nonnegative scalar curvature) of some fixed two-dimensional boundary data. Using the method of Lagrange multipliers on the constraint space of scalar-flat extensions, we derive the variational condition satisfied by a maximal capacity extension. The resulting equation 
is an inhomogeneous generalization of the well-known static equation, now coupled with the Baird--Eells stress-energy tensor for a harmonic function. We analyze these ``harmonic-static'' metrics in a local sense, proving they have constant scalar curvature and serve as  critical points for a metric-dependent Dirichlet energy functional. We conclude with a number of open questions.
\end{abstract}

\section{Introduction}
\label{sec_intro}

In \cite{Ba1}, R. Bartnik proposed the problem of finding the optimal external configuration (from the point of view of general relativity) of a given compact region (i.e., a compact Riemannian 3-manifold with boundary) $\Omega$. By ``external configuration,'' we mean an asymptotically flat 3-manifold $M$ that glues $\partial M$ to $\partial \Omega$  (subject to natural constraints, explained later); ``optimal'' is taken here to mean the least possible total (ADM) mass. This smallest value of the total mass defines a so-called quasi-local mass of $\Omega$,  called the Bartnik mass. This definition, along with Bartnik's conjecture that optimal configurations exist, have been quite influential in the last few decades in the study of  general relativity and geometric analysis.

In this paper we propose an analogous optimal configuration problem based instead on maximizing the capacity of the extension $M$ rather than minimizing its total mass. Since the capacity  (recalled below)  is defined as an infimum, this is effectively a maximin problem for finding an ``efficient'' extension of $\Omega$. 

Before proceeding further, we recall a number of details to make the discussion more precise. We will generally consider only the role played by the boundary of the region $\Omega$, beginning with the definition:
\begin{definition}
\emph{Bartnik data} is a triple $(\Sigma, \gamma, H)$, where $\Sigma$ is a smooth, connected, orientable closed surface, $\gamma$ is a smooth Riemannian metric on $\Sigma$, and $H\geq 0$ is a smooth function on $\Sigma$.
\end{definition}

Next, we recall the definition of asymptotically flat and ADM mass:
\begin{definition}
\label{def_AF}
An \emph{asymptotically flat} (AF) 3-manifold is a smooth, connected Riemannian 3-manifold $(M,g)$ (with or without boundary) with integrable scalar curvature $R_g$, for which there exists a compact set $K \subset M$, a closed ball $B(0,r) \subset \R^3$, and a diffeomorphism $\Phi: M \setminus K \to \R^3 \setminus B(0,r)$ such that in the coordinate chart induced by this diffeomorphism,
$$|g_{ij} - \delta_{ij}| \leq C |x|^{-\tau}, \qquad |\partial_k g_{ij}| \leq C |x|^{-\tau-1}, \qquad |\partial_k \partial_l g_{ij}| \leq C |x|^{-\tau-2},$$
for constants $C > 0$ and $\tau \in (\frac{1}{2}, 1]$ (where $\tau$ is called the \emph{order}).
\end{definition}

\begin{definition}
The \emph{ADM mass} \cite{adm}  of an asymptotically flat 3-manifold $(M,g)$ (proven to be well-defined by Bartnik  \cite{Ba0} and Chru\'sciel \cite{Ch}) is:
$$m_{ADM}(M,g) = \frac{1}{16\pi} \lim_{r \to \infty} \int_{S_r} \sum_{i,j=1}^3\left( \partial_ig_{ij} - \partial_j g_{ii}\right)\frac{x^j}{r} dA ,$$
where $dA$ is the induced area form on the coordinate sphere $S_r=\{|x|=r\}$ with respect to $\delta_{ij}$, using coordinates as in Definition \ref{def_AF}.
\end{definition}

We now can define asymptotically flat extensions.
\begin{definition}
\label{def_extension}
An asymptotically flat (AF) \emph{extension} of Bartnik data $(\Sigma, \gamma, H)$ is an AF 3-manifold $(M,g)$ with boundary such that 1) $M$ is diffeomorphic to the closed region exterior to a smooth, closed, embedded surface in $\R^3$ and
2) the boundary $\partial M$ isometric to $(\Sigma,\gamma)$. The AF extension is \emph{admissible} if 3) $R_g$ is nonnegative and 4)  the boundary condition $H \geq H_{\partial M}$ holds, where we use the given isometry from $\Sigma$ to $\partial M$ to compare $H$ to the mean curvature $H_{\partial M}$ of $\partial M$ in the direction pointing into $M$.
\end{definition}

Variants of the definition of AF extension and admissibility appear in the literature (for example, more general topology on $M$ is often allowed and sometimes the stricter boundary condition $H=H_{\partial M}$ is imposed). Within the context of the Bartnik mass, closed minimal surfaces (``horizons'') are often disallowed in the definition of admissible, a point we return to shortly.

\begin{remark}
Suppose $(\Omega,g_\Omega)$ is a compact orientable Riemannian 3-manifold with connected boundary. Let $\gamma$ be the induced metric on $\partial \Omega$ and $H$ the mean curvature of $\partial \Omega$ in the direction out of $\Omega$. If $H\geq 0$,  $(\partial \Omega, \gamma, H)$ is Bartnik data. If $(\Omega, g_{\Omega})$ is glued to an AF extension $(M,g)$ of this Bartnik data along their boundaries to produce a Lipschitz Riemannian manifold, then the condition $H \geq H_{\partial M}$ corresponds to the scalar curvature being distributionally nonnegative along the interface \cites{Miao_corners, ST}.
\end{remark}

The primary motivation for considering the Bartnik mass is the quasi-local mass problem in general relativity. This problem asks for a meaningful definition of ``mass'' to be assigned to a compact 3-dimensional region lying in a spacelike hypersurface in a $(3+1)$-dimensional Lorentzian spacetime. The dominant energy condition on the spacetime together with a time-symmetric (i.e., totally geodesic) assumption on the spacelike hypersurface, is equivalent to nonnegative scalar curvature on the hypersurface. Generally quasi-local mass quantities (of which many have been proposed) depend not on the full geometry of the compact region, but only the Bartnik boundary data of the boundary.

Bartnik originally defined a quasi-local mass of $(\Sigma, \gamma, H)$ by considering all AF extensions of the data with nonnegative scalar curvature, subject to a ``no-horizons'' condition, and taking the infimum of their ADM masses. This is denoted $m_B(\Sigma, \gamma, H)$. 
The no-horizons condition means that the AF extensions cannot contain a minimal surface enclosing the boundary; this was modified by Bray to require instead the boundary not be enclosed by a surface of less area. (Many subtle variants of these conditions have appeared in the literature --- see, for example \cites{McC,Jau, McC2} for discussion.)

\medskip

Inspired by Bartnik's mass, we define a related quantity,  based on the capacity instead of the ADM mass. Specifically, we will maximize the capacity among admissible AF extensions (of fixed Bartnik data). We first recall that the \emph{capacity} of the boundary $\partial M$ in an AF 3-manifold $(M,g)$ is:
$$\capac_g(\partial M) =\inf_{\psi \text{ Lipschitz}} \left\{ \frac{1}{4\pi} \int_M |\nabla \psi|^2_g dV_g \; :\; \psi|_{\partial M}=0, \quad \psi \to 1 \text{ at infinity} \right\}.$$
The infimum is realized uniquely by the $g$-harmonic function $\varphi$ that vanishes on $\partial M$ and tends to 1 at infinity; $\varphi$ is called the \emph{capacitary potential}. Existence and decay properties of $\varphi$ are well-known; we elaborate on these in Lemma \ref{prop_expansion}. From the divergence theorem,
\begin{equation}
\label{eqn_cap}
\capac_g(\partial M) = \frac{1}{4\pi} \int_M |\nabla \varphi |^2_g dV_g = \frac{1}{4\pi} \int_{\partial M} \partial_{\nu_g}(\varphi) dA_g>0,
\end{equation}
where $\nu_g$ is the unit normal to $\partial M$ pointing into $M$.

Finally, we can give our main definition:

\begin{definition}
\label{def_C}
For Bartnik data  $(\Sigma, \gamma, H)$, the \emph{maximum capacity} is:
$$\C(\Sigma, \gamma, H) = \sup_{(M,g)} \left\{ \capac_g(\partial M) \; : \; (M,g) \text{ is an admissible AF extension of } (\Sigma, \gamma, H) \right\}.$$
\end{definition}
If no admissible AF extension exists, then $\C(\Sigma, \gamma, H)$ is taken to be $-\infty$. If such an extension does exist, then clearly $\C(\Sigma, \gamma, H) > 0$, but it is not obvious that $\C(\Sigma, \gamma, H)$ is finite.

Note that, in contrast to the Bartnik mass, in the definition of $\C$ no restriction is placed comparable to ``no horizons'' or ``area-outer-minimizing'' conditions.

 \begin{remark}
Studying the \emph{minimum} value of the capacity would be uninteresting. If an admissible AF extension exists, then the infimum of the capacities among such extensions would always be zero. This is because it is possible to ``hide'' the boundary behind an arbitrarily small neck, which allows the capacity to be arbitrarily small. This is the related to the phenomenon of the Bartnik mass being trivially zero without the no-horizons condition (or some variant thereof), as originally pointed out by Bartnik \cite{Ba1}. One could prohibit small necks and/or require the boundary to be outward-minimizing, but the infimum of capacity would still be zero due to ``long necks.'' We demonstrate this explicitly in section \ref{app_minimizing} in the appendix.
\end{remark}

\begin{remark}
Nonnegativity of scalar curvature is essential in the formulation of Definition \ref{def_C}. Without that restriction, the maximum value of the capacity would always be $+\infty$ (if not $-\infty$). Likewise, $\C$ would also be $\pm\infty$ if the boundary condition on mean curvature were to be omitted in the definition of admissible extension. We prove these claims  in Proposition \ref{prop_hypotheses} in the appendix.
\end{remark}

This paper is motivated by Bartnik's well-known question of whether minimal mass extensions exist. Here we ask: is there an admissible AF extension $(M,g)$ of prescribed Bartnik data that attains the maximum capacity as in Definition \ref{def_C}? We call such a metric a \emph{maximal capacity extension}. We point out that while both the minimal mass and maximal capacity extension problems are evidently quite challenging, there are some indications that the latter could be more tractable. For one, as pointed out above, there is no need for a ``no-horizons'' type condition in the definition of $\C$. Additionally, ``long necks'' rule themselves automatically by being far from optimal from the point of view of maximizing capacity. In other words, some of the challenges associated with the Bartnik mass minimization problem are rendered moot in the capacity  problem. While a general understanding of both the minimal mass and maximal capacity problems remains out of reach, we make some progress on the latter problem in this paper. 

Our main result, Theorem \ref{thm_main}, identifies the variational condition satisfied by a maximal capacity extension. (It is well-known that minimal mass extensions are AF \emph{static metrics} --- see \cites{Cor,AJ} --- as originally conjectured by Bartnik.)
This leads to the idea of what we call a ``harmonic-static metric,'' a generalization of the well-known concept of a static metric. Recall that a static metric $g$ by definition admits a solution $u \neq 0$ to
\begin{equation}
\label{static}
\Hess(u) - (\Delta u)g -u\Ric = 0,
\end{equation}
where the Hessian, Laplacian, and Ricci tensor are with respect to $g$. 
In this paper we define a \emph{harmonic-static metric} to be any metric $g$ on a 3-manifold $M$ that admits a solution $u$ to
\begin{equation}
\label{HS}
\Hess(u) - (\Delta u)g -u\Ric = -d\varphi \otimes d\varphi +\frac{1}{2} |\nabla \varphi|^2 g
\end{equation}
where $\varphi$ is a fixed $g$-harmonic function on $M$. Theorem \ref{thm_main} states that maximal capacity extensions are harmonic-static. In that case, $\varphi$ is the capacitary potential, but we will also consider condition \eqref{HS} for general harmonic functions. Metrics satisfying \eqref{HS} are shown to have constant scalar curvature (and therefore zero scalar curvature in the AF case) --- see Proposition \ref{prop_CSC}. One interpretation of \eqref{HS} is that the homogeneous static equation \eqref{static} has been modified with an inhomogeneous ``stress-energy tensor'' term --- see remark \ref{rmk_stress}.

\begin{outline} First, in section \ref{sec_finite}, we demonstrate that $\C(\Sigma, \gamma,H)$ is nontrivial (i.e., not $\pm \infty$) in a wide range of cases. The technical setup needed for the rest of the paper is provided in section \ref{sec_technical}, along with some lemmas.
Section \ref{sec_results} works out the gradient of the capacity functional and includes the statement and proof of the main theorem: the variational characterization of maximal capacity extensions as harmonic-static metrics (Theorem \ref{thm_critical} is the technical version; Theorem \ref{thm_main} is the main version that is easier to state). We study the harmonic-static condition in a local sense in section \ref{sec_local}, proving such metrics have constant scalar curvature. We also give a local characterization of harmonic-static metrics based on a localization of the maximum capacity problem. Section \ref{sec_examples} gives a few examples of harmonic-static metrics. Additional discussion, including viewing the maximum capacity $\mathcal{C}$ as a ``quasi-local capacity'' appears in section \ref{sec_discussion}, along with some open problems. In the appendix we show the necessity of nonnegative scalar curvature for the maximum capacity problem, and, as an application of the gradient of the capacity calculation, give another proof of the formula for the variation of capacity for a flowing surface in a fixed manifold.
\end{outline}

\section{Finiteness results and bounds}
\label{sec_finite}

We first demonstrate that $\C(\Sigma, \gamma, H)$ is nontrivial by giving an upper bound in terms of Bartnik data. This will come directly from a result of Bray and Miao \cite{BM}.
\begin{prop}
\label{prop_C_upper_bound}
Let $(\Sigma, \gamma, H)$ be Bartnik data. Then 
\begin{equation}
\label{eqn_upper_bound}
\C(\Sigma, \gamma, H) \leq  \sqrt{\frac{|\Sigma|_\gamma}{16\pi}}\left(1+\sqrt{\frac{ 1}{16\pi}\int_{\partial M} H^2 dA } \right),
\end{equation}
where $|\Sigma|_{\gamma}$ is the area of $\Sigma$ with respect to $\gamma$. In particular, $\C(\Sigma, \gamma, H) < \infty$.
\end{prop}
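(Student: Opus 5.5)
The plan is to apply the capacity estimate of Bray and Miao \cite{BM} to each admissible AF extension and then take the supremum. Their theorem says roughly the following. Let $(M,g)$ be an AF 3-manifold with $R_g\ge 0$ whose boundary $\Sigma=\partial M$ is connected, under their topological hypothesis (which I expect to be $H_2(M,\Sigma)=0$ or a variant of it). Then
\[
\capac_g(\partial M)\le \sqrt{\frac{|\partial M|}{16\pi}}\left(1+\sqrt{\frac{1}{16\pi}\int_{\partial M} H_{\partial M}^2\,dA}\right).
\]
Their proof runs weak inverse mean curvature flow (Huisken--Ilmanen) from $\partial M$. It combines Geroch monotonicity of the Hawking mass with a comparison between the capacitary potential and a function of the IMCF level-set parameter.

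\textbf{Step 1: check the hypotheses.} Fix an admissible AF extension $(M,g)$ of $(\Sigma,\gamma,H)$. By Definition \ref{def_extension}, $M$ is connected and AF, $R_g\ge 0$, and $\partial M$ is connected and isometric to $(\Sigma,\gamma)$, so $|\partial M|_g=|\Sigma|_\gamma$. Because $M$ is diffeomorphic to the exterior of an embedded surface in $\R^3$, the weak IMCF of Huisken--Ilmanen exists with $\partial M$ as initial condition, and its level sets stay connected. I would verify that this topological setting is exactly what \cite{BM} requires, or can be reduced to it, for instance by filling in the complement with a handlebody for the purposes of the flow.

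\textbf{Step 2: replace $H_{\partial M}$ by $H$.} This is the step I expect to be the main obstacle. The boundary condition only gives $H_{\partial M}\le H$, and $H_{\partial M}$ may be negative somewhere. So $\int H_{\partial M}^2$ need not be bounded by $\int H^2$.

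My plan is to use the form of the estimate that is actually produced by weak IMCF. Its initial jump replaces $\partial M$ by its strictly outward-minimizing hull $\Sigma'$. This hull has
\begin{itemize}
\item area at most $|\partial M|$;
\item mean curvature $H_{\Sigma'}\ge 0$;
\item $H_{\Sigma'}=0$ off $\partial M$, and $H_{\Sigma'}=H_{\partial M}$ on $\Sigma'\cap\partial M$, which holds a.e. by the $C^{1,1}$ regularity of the hull.
\end{itemize}
The capacity of $\partial M$ is at most that of $\Sigma'$, since $\Sigma'$ encloses $\partial M$ and capacity is monotone. Applying the estimate from $\Sigma'$ therefore yields the bound with $\int (H_{\partial M})_+^2$ in place of $\int H_{\partial M}^2$. (I expect \cite{BM} either states it in this form or that their argument gives it directly.) Then $0\le (H_{\partial M})_+\le H$ gives $\int_{\partial M}(H_{\partial M})_+^2\,dA\le\int_\Sigma H^2\,dA_\gamma$.

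\textbf{Step 3: conclude.} The right-hand side of \eqref{eqn_upper_bound}, read with the integral taken over $\Sigma$ against $\gamma$, now depends only on the Bartnik data. Taking the supremum over all admissible AF extensions gives \eqref{eqn_upper_bound}. It is finite because $\Sigma$ is compact and $H$ is smooth. If there is no admissible extension, then $\C=-\infty$ and the inequality holds trivially.
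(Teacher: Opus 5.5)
Your proposal is correct and follows the same route as the paper. Both apply \cite[Theorem 1]{BM} to each admissible extension, whose exterior-region topology supplies their hypothesis, and then take the supremum; the case $\C=-\infty$ is trivial. Your Step 2, however, addresses something the paper's proof passes over. The paper goes from $\int_{\partial M}H_{\partial M}^2\,dA$ to $\int_{\partial M}H^2\,dA$ by citing only $H_{\partial M}\le H$. That gives $H_{\partial M}^2\le H^2$ only where $H_{\partial M}\ge -H$, and admissibility allows $H_{\partial M}$ to be very negative. For example, horizon data $H=0$ is realized by $\{|x|\ge r\}$ with $r<m/2$ in Schwarzschild, whose boundary has $H_{\partial M}<0$. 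Your outward-minimizing-hull argument closes this, producing the bound with $\int(H_{\partial M})_+^2\le\int H^2$.

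One refinement is worth making. You propose applying Bray--Miao as a black box to the $C^{1,1}$ hull $\Sigma'$, which would mean re-verifying their regularity and topological hypotheses for the region outside $\Sigma'$. It is cleaner to look inside their proof. There the test function $f\circ u$, with $u$ the weak IMCF time, vanishes identically between $\partial M$ and $\Sigma'=\Sigma_0^+$, while area growth and Geroch monotonicity run from $\Sigma'$. So their argument directly gives $\capac_g(\partial M)\le\sqrt{|\Sigma'|/16\pi}\,\bigl(1+\sqrt{\tfrac{1}{16\pi}\int_{\Sigma'}H_{\Sigma'}^2\,dA}\bigr)$. This expression is increasing in both area and Willmore energy. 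Your facts $|\Sigma'|\le|\partial M|$ and $\int_{\Sigma'}H_{\Sigma'}^2\,dA\le\int_{\partial M}(H_{\partial M})_+^2\,dA$ then finish the proof. The capacity-monotonicity step becomes unnecessary, though it is correct as you stated it.
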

\begin{proof}
First, if no admissible extension of $(\Sigma, \gamma, H)$ exists, then $\C(\Sigma, \gamma, H) = -\infty$. Now, if $(M,g)$ is an AF extension of $(\Sigma, \gamma, H)$ with nonnegative scalar curvature, then Bray and Miao showed in \cite[Theorem 1]{BM} that
$$\capac_g(\partial M) \leq  \sqrt{\frac{|\Sigma|_\gamma}{16\pi}}\left(1+\sqrt{\frac{ 1}{16\pi}\int_{\partial M} H_{\partial M}^2 dA } \right).$$
Since $H_{\partial M} \leq H$ for an admissible extension, the (finite) upper bound \eqref{eqn_upper_bound} for $\C$ follows.
\end{proof}

\begin{remark}
It would be desirable to relax the assumption in our definition of AF extension that the manifold be diffeomorphic to the complement of a bounded domain. In that case, Bray and Miao's result would not apply due to a topological restriction in their argument. Alternatively, if we restricted to AF extensions such that $H_{\partial M} < 4 |\nabla \varphi|$ for the capacitary potential $\varphi$ of the extension, then the finiteness result in Proposition \ref{prop_C_upper_bound}, along with an upper bound on $\mathcal{C}$, would follow from the work of Mantoulidis--Miao--Tam \cite{MMT}. This includes the ``horizon'' case in which $H_{\partial M}=0$. 
\end{remark}

Using known results on the existence of AF extensions, we can infer that $\C > -\infty$ for a broad class of Bartnik data.
\begin{prop}
\label{prop_ext_exist}
Let $(\Sigma, \gamma, H)$ be Bartnik data, where $\Sigma$ is a topological 2-sphere. Suppose $\gamma$ has positive Gauss curvature (or, more generally, that the first eigenvalue of the operator $-\Delta_\gamma + K$ is positive). Then $\C(\Sigma, \gamma, H) > -\infty$, so in fact  $\C(\Sigma, \gamma, H) > 0$.
\end{prop}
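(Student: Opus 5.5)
The plan is to produce one admissible AF extension from known constructions in the literature, namely the collar extensions of Mantoulidis--Schoen, generalized by Mantoulidis--Miao and by Cabrera Pacheco--Miao among others. Once a single admissible AF extension $(M,g)$ exists, the supremum in Definition \ref{def_C} is over a nonempty set. By \eqref{eqn_cap} every such extension has $\capac_g(\partial M)>0$, so $\C(\Sigma,\gamma,H)>0$.

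The construction goes in three steps.

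\textbf{Step 1: a path of metrics.} Since the first eigenvalue of $-\Delta_\gamma + K$ is positive on the sphere, the Mantoulidis--Schoen argument gives a smooth path of metrics $\gamma(t)$, $t\in[0,1]$, from $\gamma(0)=\gamma$ to a round metric $\gamma(1)$. Along the path the operator $-\Delta_{\gamma(t)}+K_{\gamma(t)}$ stays positive, and the path can be arranged so that $\tr_{\gamma(t)}\gamma'(t)=0$.

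\textbf{Step 2: the collar.} On $\Sigma\times[0,1]$ take the metric $dt^2\cdot A^2 + E(t)\gamma(t)$ with $E$ increasing, after a reparametrization in which the mean curvature of the slices is controlled. The positivity of the eigenvalue is what allows the scalar curvature to be made nonnegative. The increasing $E$ makes the slices mean-convex and lets the collar end in a round sphere.

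\textbf{Step 3: gluing.} At the round end, attach a Schwarzschild exterior of suitable mass $m$, using the smoothing/gluing lemma in those papers. This gives an AF metric with $R\ge 0$, diffeomorphic to the exterior of a sphere in $\R^3$, whose inner boundary is isometric to $(\Sigma,\gamma)$.

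The boundary mean curvature condition $H\ge H_{\partial M}$ is where I expect the main obstacle. The collar's inner boundary mean curvature equals $E'(0)/(A\,E(0))$ times a positive quantity, roughly, and it can be made arbitrarily small by taking the collar long (large $A$), at the cost of a larger ADM mass $m$. Since $H\ge 0$ and the data are smooth, I would try taking $H_{\partial M}$ to be a small positive constant bounded by $\min H$ when $\min H>0$. If $H$ vanishes somewhere, I would instead aim for $H_{\partial M}\equiv 0$, i.e.\ a collar starting at a minimal boundary. Mantoulidis--Schoen collars do exactly this: the boundary is minimal and the condition holds trivially. Indeed, a minimal boundary suffices in all cases, because admissibility only requires $H\ge H_{\partial M}=0$, which always holds. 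So the cleanest route is to cite the Mantoulidis--Schoen theorem directly: for $\gamma$ on $S^2$ with $\lambda_1(-\Delta_\gamma+K)>0$, there is an AF metric with $R\ge0$ (in fact scalar-flat outside a compact set), diffeomorphic to $\R^3$ minus a ball, whose boundary is a minimal surface isometric to $(\Sigma,\gamma)$.

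The remaining check is that their extensions satisfy the decay conditions of Definition \ref{def_AF}, which holds since they are exactly Schwarzschild near infinity.
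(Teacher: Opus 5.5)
Your final route is exactly the paper's proof. You cite Mantoulidis--Schoen for an AF extension with $R\ge 0$, diffeomorphic to a ball complement, whose boundary is minimal and isometric to $(\Sigma,\gamma)$; then $H\ge 0=H_{\partial M}$ gives admissibility, and positivity of capacity gives $\C(\Sigma,\gamma,H)>0$. Once that theorem is cited, your Steps 1--3 sketch is unnecessary, and it is slightly inaccurate: the collar metric there should carry the eigenfunction factor $u(t,\cdot)^2$ in the $dt^2$ term.
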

\begin{proof}
In \cite{MS} Mantoulidis and Schoen construct AF 3-manifolds, diffeomorphic to a ball complement, with nonnegative scalar curvature whose boundary is a minimal surface and can be arranged to be isometric to an arbitrary metric on $S^2$ with $\lambda_1(-\Delta_\gamma+K)>0$. Such a manifold is an admissible AF extension of $(\Sigma,\gamma,H)$ (since $H \geq 0$ by assumption). Thus $\C(\Sigma, \gamma, H) > -\infty$. Since the capacity is always positive in any AF extension, $\C(\Sigma, \gamma, H) > 0$.
\end{proof}

In the rotationally symmetric case, the value of $\C$ is explicitly computable, and maximal capacity extensions can be characterized: 
\begin{prop}
\label{prop_round}
Suppose $(\Sigma, \gamma)$ is a round 2-sphere and $H \geq 0$ is a constant. Then
\begin{equation}
\label{eqn_C_rot_sym}
\C(\Sigma, \gamma, H) = \sqrt{\frac{|\Sigma|_\gamma}{16\pi}}\left(1+H\sqrt{\frac{ |\Sigma|_{\gamma}}{16\pi} } \right).
\end{equation}
Moreover, there exists a unique maximal capacity extension of $(\Sigma,\gamma,H)$ (up to isometry), given by a subset of a Schwarzschild manifold.
\end{prop}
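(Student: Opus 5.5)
The plan has two halves: the upper bound comes from Proposition \ref{prop_C_upper_bound}, and it is attained by a rotationally symmetric Schwarzschild exterior. Uniqueness then comes from the rigidity case of the Bray--Miao inequality.

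\emph{Upper bound.} Since $H$ is constant, $\int_\Sigma H^2\, dA = H^2 |\Sigma|_\gamma$. Hence
$$\sqrt{\tfrac{1}{16\pi}\int H^2\, dA} = H\sqrt{\tfrac{|\Sigma|_\gamma}{16\pi}},$$
and \eqref{eqn_upper_bound} becomes exactly the right-hand side of \eqref{eqn_C_rot_sym}. So it suffices to exhibit an admissible AF extension whose capacity equals this value.

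\emph{Attaining the bound.} Let $r_0 = \sqrt{|\Sigma|_\gamma/4\pi}$ be the area radius. Consider the Schwarzschild metric $g_m = (1-2m/r)^{-1}dr^2 + r^2 g_{S^2}$ on $\{r \ge r_0\}$. The coordinate sphere $\{r=r_0\}$ is round with area $|\Sigma|_\gamma$ and has mean curvature $\frac{2}{r_0}\sqrt{1-2m/r_0}$. Choose $m$ so that this equals $H$, namely
$$m = \frac{r_0}{2}\left(1 - \frac{r_0^2 H^2}{4}\right).$$
Here $m \ge 0$ when $H \le 2/r_0$ and $m<0$ otherwise. In both cases $\{r\ge r_0\}$ is a smooth, scalar-flat, AF region diffeomorphic to a ball complement with boundary isometric to $(\Sigma,\gamma)$ and $H_{\partial M}=H$, so it is an admissible extension. (When $m>0$ we need $r_0 \ge 2m$, which holds automatically since $H\ge 0$.)

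Next compute the capacity. By symmetry the capacitary potential is radial. Harmonicity gives $r^2\sqrt{1-2m/r}\,\varphi'(r) = c$, with $c$ fixed by $\varphi(r_0)=0$ and $\varphi\to 1$ at infinity. By \eqref{eqn_cap}, the capacity equals $c$. Integrating $\int_{r_0}^\infty dr/(r^2\sqrt{1-2m/r})$ explicitly (substitute $s = \sqrt{1-2m/r}$) gives
$$c = \frac{r_0}{2}\Bigl(1+\sqrt{1-2m/r_0}\Bigr) = \frac{r_0}{2}\Bigl(1+\frac{r_0 H}{2}\Bigr).$$
This equals the right-hand side of \eqref{eqn_C_rot_sym}. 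The $m=0$ case should be handled as a limit, where the formula gives $c=r_0$. Alternatively, one can note that these Schwarzschild exteriors are precisely the equality cases of \cite[Theorem 1]{BM}.

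\emph{Uniqueness.} Suppose $(M,g)$ is an admissible extension attaining $\C$. Then both inequalities in the proof of Proposition \ref{prop_C_upper_bound} are equalities: the Bray--Miao inequality itself, and $\int H_{\partial M}^2 = \int H^2$. I would invoke the rigidity statement of \cite{BM}, that equality forces $(M,g)$ to be isometric to a rotationally symmetric Schwarzschild exterior of a round sphere. Then $H_{\partial M}$ is a constant with $H_{\partial M}^2 = H^2$, so $H_{\partial M}=H$, and $m$ is determined by the formula above.

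\emph{Main obstacle.} The delicate step is checking exactly what \cite{BM} assumes and concludes. First, the comparison $H_{\partial M}\le H$ only yields $\int H_{\partial M}^2\le\int H^2$ if $H_{\partial M}\ge 0$. I expect their inverse-mean-curvature-flow argument either requires this or still bounds by the positive part, and this needs to be confirmed. Second, their rigidity case must include the negative-mass Schwarzschild exteriors, which are needed when $H>2/r_0$. If the cited rigidity does not cover negative mass, I would redo the equality analysis along their argument: equality in the monotonicity formulas forces the level sets of $\varphi$ to be umbilic with constant $|\nabla\varphi|$, which yields rotational symmetry and scalar-flatness, hence Schwarzschild with some $m\in\R$.
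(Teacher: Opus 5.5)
Your proposal is correct and follows the paper's proof: the upper bound is Proposition~\ref{prop_C_upper_bound} specialized to constant $H$, the bound is attained by the Schwarzschild exterior with the prescribed area and mean curvature, and uniqueness comes from the rigidity case of \cite{BM}. The differences are cosmetic. The paper works in isotropic coordinates, where the capacitary potential $\frac{1-r/|x|}{1+m/(2|x|)}$ is explicit and gives capacity $r+\frac{m}{2}$, while you integrate the radial ODE in areal coordinates. Your caveats about the sign of $H_{\partial M}$ and about negative-mass rigidity concern inputs that the paper also takes from \cite{BM} without further comment.
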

\begin{proof}
Given Bartnik data $(\Sigma, \gamma, H)$ with $\gamma$ round and $H \geq 0$ constant, it is well known and straightforward to verify that there exists a unique $m \in \R$ and $r \geq \frac{m}{2}$ (if $m > 0$) or $r > \frac{|m|}{2}$ (if $m \leq 0$) such that in the Schwarzschild metric of mass $m$, i.e.,
$$g_{ij} = \left(1+ \frac{m}{2|x|}\right)^4 \delta_{ij},$$
the  $|x|=r$ coordinate sphere $S_r$ is isometric to $\gamma$ and has mean curvature equal to $H$. In particular, the region $M_r$ corresponding to $|x| \geq r$ is an admissible extension of $(\Sigma, \gamma,H)$. Noting  the function
$\frac{ 1-\frac{r}{|x|}} {1+\frac{m}{2|x|}}$
is the capacitary potential of $S_r=\partial M_r$ in $M_r$, it is straightforward to check that the capacity of $S_r$ in $M_r$ equals $r+\frac{m}{2}$.

On the other hand, by direct computation, one can see that the right-hand side of \eqref{eqn_C_rot_sym} also equals $r+\frac{m}{2}$ and hence equals the capacity of $S_r$ in $M_r$. This shows
$$\C(\Sigma, \gamma, H) \geq \sqrt{\frac{|\Sigma|_\gamma}{16\pi}}\left(1+H\sqrt{\frac{ |\Sigma|_{\gamma}}{16\pi} } \right).$$
However,  \cite[Theorem 1]{BM} gives the reverse inequality, so the rigidity case of this theorem applies. In particular, any admissible extension of $(\Sigma, \gamma, H)$ with capacity equal to $\C(\Sigma,\gamma,H)$ is isometric to $(M_r,g)$.
\end{proof}

\section{Technical setup}
\label{sec_technical}
Fix Bartnik data $(\Sigma, \gamma, H)$. Let $\Omega \subset \R^3$ be a bounded domain whose boundary is diffeomorphic to $\Sigma$. Since any extension of $(\Sigma, \gamma, H)$ is diffeomorphic to $\R^3 \setminus \Omega$ by definition, we will simply fix $M = \R^3 \setminus \Omega$ and consider AF metrics $g$ on $M$. Note $M$ inherits the standard global coordinate chart $(x^i)$ from $\R^3$. We fix a smooth function $\sigma \geq 1$ on $M$ with $\sigma = |x|$ outside some compact set. Let $\sigma(x,y) = \min(\sigma(x), \sigma(y))$. We now recall the definition of weighted H\"older spaces, closely following \cite{Cor2}. For $\alpha \in (0,1)$, $\beta>0$, and a function $f$ on $M$, let
$$[f]_{\alpha, - \beta} = \sup_{x, y \in M, x \neq y} \sigma(x,y)^{\alpha+\beta} \frac{|f(x)-f(y)|}{|x-y|^\alpha}.$$
The weighted H\"older space $C^{k,\alpha}_{-\beta}(M)$, a Banach space, consists of all functions $f \in C^{k,\alpha}(M)$ such that the norm
$$\|f\|_{C^{k,\alpha}_{-\beta}(M)} = \sum_{|\lambda| \leq k} \|\sigma^{\beta + |\lambda|} \partial_\lambda f \|_{L^\infty(M)} + \sum_{|\lambda|=k} [\partial_\lambda f]_{\alpha, -\beta-k}$$
is finite, where the sums are taken over multi-indices $\lambda$.

Now, fix an integer $k \geq 2$, $\alpha \in (0,1)$, and $\tau \in \left(\frac{1}{2}, 1\right)$. We consider the space $\M^{k,\alpha}_{-\tau}(M)$ of Riemannian metrics $g$ on $M$ whose coefficients satisfy
$$g_{ij}-\delta_{ij} \in C^{k,\alpha}_{-\tau}(M),$$
i.e. metrics $g$ that decay appropriately to $\delta_{ij}$ at infinity in the fixed coordinate chart. We will sometimes omit the ``$-\tau$'' from the notation for simplicity.
Since $C^{k,\alpha}_{-\tau}(M)$ is a Banach space, $\M^{k,\alpha}(M)$ is a smooth Banach manifold. Later, we will show in Lemma \ref{lemma_regularity} that using elements of $\M^{k,\alpha}(M)$ as test functions for $\C$ is equivalent to using general AF extensions as in the definition of $\C$ from the introduction.

We gather some results on weighted elliptic estimates as stated in \cite{Cor2}, Proposition 2.4:

\begin{prop}
\label{prop_estimates}
Let $\beta >0$. Suppose $g \in \M^{k-1,\alpha}_{-\tau}(M)$. If $w \in C^0_{-\beta}(M)$ and $\Delta_g w \in C^{k-2,\alpha}_{-\beta-2}(M)$, then 
$w \in C^{k,\alpha}_{-\beta}(M)$. Moreover, there exists $C>0$ such that for all $w \in C^{k,\alpha}_{-\beta}(M)$ with $w=0$ on $\partial M$,
\begin{equation}
\label{ell_estimate1}
\|w\|_{C^{k,\alpha}_{-\beta}} \leq C \left( \|\Delta_g w\|_{C^{k-2,\alpha}_{-\beta-2}} + \|w\|_{C^{0,\alpha}_{-\beta}}\right).
\end{equation}
Also for all $w \in C^{k,\alpha}_{-\tau}(M)$ with $w=0$ on $\partial M$,
\begin{equation}
\label{ell_estimate2}
\|w\|_{C^{k,\alpha}_{-\tau}} \leq C  \|\Delta_g w\|_{C^{k-2,\alpha}_{-\tau-2}}.
\end{equation}
Finally, $\Delta_g : C^{k,\alpha}_{-\tau}(M) \to C^{k-2,\alpha}_{-\tau-2}(M)$ restricts to an isomorphism from the subspace of the domain consisting of functions that vanish on $\partial M$.
\end{prop}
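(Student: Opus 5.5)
This proposition is essentially a restatement of standard weighted Schauder theory for asymptotically flat metrics, as in \cite{Cor2} and Bartnik's weighted analysis. I would prove it by combining interior/boundary Schauder estimates on dyadic annuli with a scaling argument, and then using the maximum principle for injectivity.

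\emph{Regularity and the first estimate \eqref{ell_estimate1}.} Cover $M$ by a fixed compact neighborhood $K$ of $\partial M$ together with annuli $A_R=\{R<|x|<2R\}$ for $R$ large. On $K$, standard boundary Schauder theory for $\Delta_g$ with Dirichlet data applies, since $g\in C^{k-1,\alpha}$ has coefficients regular enough for $C^{k,\alpha}$ estimates with a $C^{k-2,\alpha}$ right-hand side. On each $A_R$, rescale via $x=Ry$ to the fixed annulus $\{1/2<|y|<4\}$. The rescaled metric $g_R(y)=g(Ry)$ is uniformly close to $\delta$ in $C^{k-1,\alpha}$, precisely because $g-\delta\in C^{k-1,\alpha}_{-\tau}$. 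Interior Schauder estimates there hold with constants independent of $R$. Set $w_R(y)=w(Ry)$, so that $\Delta_{g_R}w_R=R^2(\Delta_g w)(Ry)$. Multiplying the estimate by $R^{\beta}$ and unscaling yields exactly the weighted norms: $\sigma^{\beta+|\lambda|}\partial_\lambda w$ and the weighted H\"older seminorm on $A_R$, bounded by $\|\Delta_g w\|_{C^{k-2,\alpha}_{-\beta-2}}+\|w\|_{C^0_{-\beta}}$ on the enlarged annulus. For the qualitative statement, first bootstrap locally to get $w\in C^{k,\alpha}_{loc}$, then apply the same scaled estimates. Taking the supremum over $R$ gives \eqref{ell_estimate1}; the H\"older seminorm for pairs $x,y$ in different annuli is controlled by the sup terms, since $|x-y|\gtrsim\sigma(x,y)$ there.

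\emph{Improved estimate \eqref{ell_estimate2} and the isomorphism.} I would argue by contradiction. Suppose there are $w_n$ with $w_n=0$ on $\partial M$, $\|w_n\|_{C^{k,\alpha}_{-\tau}}=1$, and $\|\Delta_g w_n\|\to 0$. By \eqref{ell_estimate1}, applied with a slightly smaller weight $\tau'<\tau$, together with Arzel\`a--Ascoli, a subsequence converges locally to a harmonic $w$ with $w=0$ on $\partial M$ and $|w|\le C\sigma^{-\tau}$. Then $w\to0$ at infinity, and the maximum principle gives $w\equiv0$.

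The main obstacle is preventing the mass of $w_n$ from escaping to infinity, that is, showing $\sup\sigma^{\tau}|w_n|\to0$. Here I would use that $\tau\in(0,1)$ is not an indicial root of the flat Laplacian: the exceptional rates are $0,-1,-2,\dots$. Concretely, I would construct a barrier $\sigma^{-\tau}$. One has $\Delta_\delta |x|^{-\tau}=-\tau(1-\tau)|x|^{-\tau-2}<0$, and this persists for $\Delta_g$ at large $|x|$. Comparing $w_n$ with $\varepsilon_n\sigma^{-\tau}$ plus a multiple of the harmonic function decaying like $|x|^{-1}$ then gives uniform smallness of $\sigma^\tau|w_n|$ near infinity, while the local convergence handles compact sets. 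This yields the contradiction, and \eqref{ell_estimate2} follows.

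Injectivity then follows from \eqref{ell_estimate2}. For surjectivity, given $f\in C^{k-2,\alpha}_{-\tau-2}$, I would solve on exhausting bounded domains $M\cap B_R$ with zero Dirichlet data. The barrier $C\|f\|\sigma^{-\tau}$ bounds these solutions uniformly in $C^0_{-\tau}$, so \eqref{ell_estimate1} gives uniform $C^{k,\alpha}_{-\tau}$ bounds. A limit as $R\to\infty$ then solves $\Delta_g w=f$, $w=0$ on $\partial M$, completing the proof.
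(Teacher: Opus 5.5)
The paper gives no proof of this proposition; it quotes it from \cite{Cor2}, Proposition 2.4. Your proposal reconstructs the standard argument behind that citation, and it is sound in outline:
\begin{itemize}
\item rescaled Schauder estimates on overlapping dyadic annuli give \eqref{ell_estimate1}, in fact with $\|w\|_{C^0_{-\beta}}$ in place of $\|w\|_{C^{0,\alpha}_{-\beta}}$, which is slightly stronger;
\item the fact that $-\tau\in(-1,0)$ is not an exceptional weight, encoded in the supersolution $|x|^{-\tau}$, removes the lower-order term;
\item exhaustion gives surjectivity.
\end{itemize}
Unlike the bare citation, it shows exactly where $0<\tau<1$ and $g\in C^{k-1,\alpha}$ are used. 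Two points need repair.

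\textbf{The barrier.} $\sigma^{-\tau}$ is a supersolution only where $|x|$ is large. On the compact part, $\sigma$ is an arbitrary function $\geq 1$ and $\Delta_g\sigma^{-\tau}$ has no sign, so your claim that $C\|f\|\sigma^{-\tau}$ bounds the truncated solutions is unjustified as stated. You need a global barrier $v$ with $v\asymp\sigma^{-\tau}$ and $\Delta_g v\le -c\,\sigma^{-\tau-2}$ on all of $M$. One construction:
\begin{itemize}
\item Translate so that $0\in\Omega$. Then $r=|x|$ is smooth on $M$, with $|dr|_g^2\ge c_1>0$ and $|\Delta_g r|\le C_2$.
\item Fix $R_0$ so that $\Delta_g r^{-\tau}\le-\frac{1}{2}\tau(1-\tau)r^{-\tau-2}$ for $r\ge R_0$.
\item Set $v=F(r)$, where $F'(r)=-e^{\lambda(r-r_{\min})}$ for $r\le R_0$ with $\lambda\ge(1+C_2)/c_1$, and $F=Ar^{-\tau}$ for $r\ge R_0$, with $A$ chosen so that $F'$ is continuous at $R_0$.
\end{itemize}
Then $F>0$, $F\in C^{1,1}$ (smooth it near $R_0$ if you prefer), and $\Delta_g v\le -1$ for $r<R_0$.

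Comparing $\pm w$ with $C\|\Delta_g w\|_{C^0_{-\tau-2}}\,v$ gives $\|w\|_{C^0_{-\tau}}\le C\|\Delta_g w\|_{C^0_{-\tau-2}}$ for every $w$ vanishing on $\partial M$. The same bound holds uniformly for the truncated solutions on $M\cap B_R$. So \eqref{ell_estimate2} follows directly from \eqref{ell_estimate1}, and your compactness argument becomes unnecessary.

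The truncated solutions are not defined on all of $M$, so their uniform $C^{k,\alpha}$ bounds on compact sets must come from local Schauder estimates, not from \eqref{ell_estimate1}. The limit is then placed in $C^{k,\alpha}_{-\tau}$ by the regularity statement.

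\textbf{Regularity at the boundary.} The qualitative claim ``$w\in C^0_{-\beta}$, $\Delta_g w\in C^{k-2,\alpha}_{-\beta-2}$ implies $w\in C^{k,\alpha}_{-\beta}$'' cannot hold up to $\partial M$ without boundary data. On $\R^3\setminus B_1$ with the Euclidean metric, take the decaying harmonic extension of a continuous but non-$C^1$ function on the unit sphere. It lies in $C^0_{-1}$ and is harmonic, yet it is not $C^1$ up to the boundary.

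So your ``bootstrap locally'' step works only if $w=0$ (or has smooth Dirichlet data) on $\partial M$. That is the situation in every use of the proposition in the paper. Without it, the argument yields $C^{k,\alpha}_{-\beta}$ control only away from $\partial M$. This is a defect of the statement as transcribed, but your proof should make the restriction explicit.
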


We recall the existence, regularity, and asymptotics of the capacitary potential. 
\begin{lemma}
\label{prop_expansion}
If $g \in \M^{k,\alpha}(M)$, then the capacitary potential $\varphi \in C^{k+1,\alpha}_{loc}(M)$  of $(M,g)$ exists and satisfies
$$\varphi(x) = 1-\frac{\capac_g(\partial M)}{|x|} + E(x),$$
where the error term $E(x)$ belongs to $C^{k+1,\alpha}_{-\gamma}(M)$ for a constant $\gamma > 1$. In particular, $1-\varphi \in C^{k+1,\alpha}_{-\tau}(M)$.
\end{lemma}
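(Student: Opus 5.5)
The plan is to reduce the existence question to the isomorphism statement in Proposition \ref{prop_estimates}, and then to get the expansion by comparing with the Euclidean Green's function at infinity.

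\emph{Existence and regularity.} Fix a smooth cutoff $\chi$ on $M$ with $\chi = 0$ near $\partial M$ and $\chi = 1$ outside a compact set. Look for $\varphi = \chi + w$ with $w \in C^{k+1,\alpha}_{-\tau}(M)$ and $w|_{\partial M} = 0$. The equation for $w$ is
\[
\Delta_g w = -\Delta_g \chi .
\]
Outside a compact set, $\Delta_g \chi = \Delta_g 1 = 0$, so the right-hand side has compact support and lies in $C^{k-1,\alpha}_{-\tau-2}(M)$.

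Since $g \in \M^{k,\alpha}$, the isomorphism statement of Proposition \ref{prop_estimates}, applied with $k+1$ in place of $k$, produces a unique such $w$. (This uses coefficients of $g$ in $C^{k,\alpha}$, which is exactly what is assumed.) Then $\varphi$ is $g$-harmonic, vanishes on $\partial M$, and tends to $1$ at infinity. It also lies in $C^{k+1,\alpha}_{loc}$, and $1-\varphi = 1-\chi - w \in C^{k+1,\alpha}_{-\tau}$.

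By the maximum principle $\varphi$ is unique, and by the standard Dirichlet-principle argument it realizes the infimum defining $\capac_g$.

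\emph{Expansion.} Set $v = 1-\varphi$. In the exterior coordinates,
\[
\Delta_\delta v = (\Delta_\delta - \Delta_g) v = -(g^{ij}-\delta^{ij})\partial_{ij} v + (\text{first-order terms}).
\]
The coefficients of $g^{ij}-\delta^{ij}$ decay like $|x|^{-\tau}$, and the Christoffel symbols like $|x|^{-\tau-1}$. Since $v \in C^{k+1,\alpha}_{-\tau}$, the right-hand side is $f \in C^{k-1,\alpha}_{-2\tau-2}$, with $2\tau+2 > 3$.

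Multiply $v$ by a cutoff supported near infinity to obtain a function $\tilde v$ on $\R^3$, and compare it with the Newtonian potential $N f$. The decay $|f| \lesssim |x|^{-2\tau-2}$ with $2\tau + 2 > 3$ makes $f$ integrable, and $N f$ then satisfies
\[
N f = \frac{c}{|x|} + O\bigl(|x|^{-\gamma}\bigr) \quad \text{for some } \gamma \in (1, 2\tau).
\]
The difference $\tilde v - N f$ is harmonic and decays at infinity, so it is $a/|x| + O(|x|^{-2})$.

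The derivative bounds on the error $E$ follow by applying the weighted estimate \eqref{ell_estimate1} to $E$, using $\Delta_\delta(1/|x|) = 0$ and $\Delta_g E \in C^{k-1,\alpha}_{-\gamma-2}$. This gives $E \in C^{k+1,\alpha}_{-\gamma}$. If the first pass yields only a weaker rate, I will bootstrap to improve it, possibly shrinking $\gamma$ while keeping $\gamma > 1$.

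\emph{Identifying the coefficient.} Write $\varphi = 1 - A/|x| + E$ and compute the flux in \eqref{eqn_cap} over a large coordinate sphere $S_r$. Since $\varphi$ is harmonic, the divergence theorem shows that the flux through $\partial M$ equals the flux through $S_r$. On $S_r$,
\[
\partial_\nu \varphi = \frac{A}{r^2} + O\bigl(r^{-2-\tau}\bigr) + O\bigl(r^{-\gamma-1}\bigr),
\]
and the $g$-area of $S_r$ is $4\pi r^2\bigl(1 + O(r^{-\tau})\bigr)$. Letting $r \to \infty$ gives $\capac_g(\partial M) = A$.

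\emph{Main obstacle.} The step I expect to be hardest is establishing the $1/|x|$ leading term with an error that decays strictly faster than $|x|^{-1}$. The danger is that one pass of the comparison only yields decay of order $\min(2\tau, \ldots)$, which must be checked to exceed $1$. It does so precisely because $\tau > \tfrac{1}{2}$, and that hypothesis is where this lemma relies on the choice of weights.
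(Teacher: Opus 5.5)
Your proposal is correct and follows the same route as the paper. The paper simply calls existence well known (pointing to \eqref{ell_estimate2}) and cites Proposition~2.6 of \cite{Cor2} for the expansion. The input that citation needs is exactly your observation that $\Delta_\delta(1-\varphi)\in C^{k-1,\alpha}_{-2\tau-2}$ with $2\tau+2>3$, as the paper spells out for $\psi$ in Lemma~\ref{lemma_path}. Your Newtonian-potential comparison proves the cited expansion by hand; in fact $\tilde v - N\tilde f$ is entire harmonic and decaying, hence identically zero.

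One small correction concerns the final regularity of $E$. Use the qualitative first assertion of Proposition~\ref{prop_estimates} (with $k+1$ in place of $k$) rather than the a priori estimate \eqref{ell_estimate1}. That estimate does not apply here, because $E$ neither vanishes on $\partial M$ nor is yet known to lie in $C^{k+1,\alpha}_{-\gamma}$. The bound $\Delta_g E = A\,\Delta_g(|x|^{-1}) = O(|x|^{-3-\tau})$ then suffices, since $\gamma < 2\tau < \tau+1$.
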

\begin{proof}
Existence is well-known, and the local regularity follows from standard elliptic theory (or from \eqref{ell_estimate2}). The expansion follows from Proposition 2.6 of \cite{Cor2}, for example. We also refer the reader to   \cite[Lemma A.2]{MMT} for a similar asymptotic expression for the potential function.
\end{proof}

Next, we consider a family of metrics and the corresponding family of potentials.
\begin{lemma}
\label{lemma_path}
Let $t \mapsto g_t$ be a differentiable path in $\M^{k,\alpha}(M)$ defined on $(-\epsilon, \epsilon)$, and let $\varphi_t$ be the corresponding capacitary potentials. Then $t\mapsto 1-\varphi_t$ is a differentiable path in $C^{k+1,\alpha}_{-\tau}(M)$. Moreover, 
there exists $A \in \R$ such that
$$\left.\frac{d}{dt} \varphi_t \right|_{t=0} = \frac{A}{|x|} + E(x),$$
where the error term $E(x)$ belongs to $C^{k+1,\alpha}_{-\gamma}(M)$ for a constant $\gamma > 1$. In fact, 
$$A= - \frac{d}{dt} \left.\capac_{g_t} (\partial M) \right|_{t=0}.$$
\end{lemma}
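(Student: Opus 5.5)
Set $w_t = 1-\varphi_t$, so that $w_t$ is the unique function in $C^{k+1,\alpha}_{-\tau}(M)$ with $w_t=1$ on $\partial M$ and $\Delta_{g_t} w_t = 0$; Lemma \ref{prop_expansion} guarantees $w_t \in C^{k+1,\alpha}_{-\tau}(M)$. To put this in a form where Proposition \ref{prop_estimates} applies, I will fix a smooth compactly supported function $\chi$ on $M$ with $\chi = 1$ near $\partial M$. Then $v_t := w_t - \chi$ vanishes on $\partial M$ and solves
$$\Delta_{g_t} v_t = -\Delta_{g_t}\chi .$$
Consider the map
$$F(t,v) = \Delta_{g_t} v + \Delta_{g_t}\chi,$$
defined on $(-\epsilon,\epsilon)\times X$ with values in $C^{k-1,\alpha}_{-\tau-2}(M)$, where $X$ is the closed subspace of $C^{k+1,\alpha}_{-\tau}(M)$ consisting of functions vanishing on $\partial M$.

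The map $F$ is $C^1$. The coefficients of $\Delta_g$ are $g^{ij}$ and $g^{ij}\Gamma^k_{ij}$, which depend smoothly on $g\in\M^{k,\alpha}$ as maps into the appropriate weighted spaces: $g^{ij}-\delta^{ij}\in C^{k,\alpha}_{-\tau}$ and $\Gamma \in C^{k-1,\alpha}_{-\tau-1}$. Multiplication between weighted H\"older spaces is continuous and bilinear. Composing with the differentiable path $t\mapsto g_t$ therefore gives a $C^1$ map.

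I then apply the implicit function theorem. The partial derivative $\partial_v F(0,v_0)=\Delta_{g_0}$ is an isomorphism $X\to C^{k-1,\alpha}_{-\tau-2}$ by the last part of Proposition \ref{prop_estimates}, applied with $k+1$ in place of $k$; this is legitimate because $g\in\M^{k,\alpha}$. The theorem yields a $C^1$ curve $t\mapsto v_t$ in $X$. By uniqueness of the capacitary potential, this curve coincides with $w_t-\chi$, so $t\mapsto 1-\varphi_t$ is differentiable in $C^{k+1,\alpha}_{-\tau}$.

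Next I extract the expansion. Differentiating $\Delta_{g_t}\varphi_t=0$ at $t=0$ gives
$$\Delta_{g_0}\dot\varphi = -\dot\Delta\,\varphi_0 = \dot\Delta\, w_0,$$
where $\dot\Delta$ is the variation of the Laplacian, a second-order operator with coefficients in $C^{k,\alpha}_{-\tau}$ and lower-order coefficients in $C^{k-1,\alpha}_{-\tau-1}$. Since $w_0\in C^{k+1,\alpha}_{-\tau}$, the terms of $\dot\Delta w_0$ decay like $|x|^{-\tau}\cdot|x|^{-\tau-2}$, so $\Delta_{g_0}\dot\varphi \in C^{k-1,\alpha}_{-2\tau-2}$ with $2\tau>1$. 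Also $\dot\varphi$ vanishes on $\partial M$ and lies in $C^{k+1,\alpha}_{-\tau}$.

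With that source decay, I invoke the same expansion result used for Lemma \ref{prop_expansion}, namely \cite[Proposition 2.6]{Cor2}. A function whose Laplacian lies in $C_{-2-\gamma'}$ with $\gamma'>1$ and which decays at infinity has an expansion $A/|x| + E$ with $E\in C^{k+1,\alpha}_{-\gamma}$ for some $\gamma>1$. This step is the main obstacle: I must check that the weight hypotheses of that result hold, which they do after choosing $\gamma\in(1,\min(2\tau,\ldots))$ avoiding exceptional weights. An alternative route is to differentiate directly the expansions $\varphi_t = 1-c_t/|x|+E_t$. That route requires uniform control of $E_t$ in $t$, which is less clean, so the PDE route is preferable.

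Finally, to identify $A$, I write
$$\dot\varphi = \lim_{s\to0}\frac{\varphi_s-\varphi_0}{s}.$$
For each $s$, the difference quotient equals
$$-\frac{\capac_{g_s}-\capac_{g_0}}{s}\cdot\frac{1}{|x|} + \frac{E_s-E_0}{s}.$$
The coefficient of $|x|^{-1}$ can be read off as $\lim_{r\to\infty}$ of $r$ times the average of the function over the coordinate sphere $S_r$, applied to the difference quotient. This is a continuous linear functional on the space $\{a/|x|+C_{-\gamma}\}$. Equivalently, it is $-\frac{1}{4\pi}\lim_r\int_{S_r}\partial_r(\cdot)\,r^2\,d\Omega$, which is continuous in the $C^{1}_{-\tau}$ topology when combined with the equation $\Delta_{g_0}\dot\varphi\in C_{-2\tau-2}$ via the divergence theorem. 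Convergence of the difference quotients in $C^{k+1,\alpha}_{-\tau}$ then gives differentiability of $t\mapsto\capac_{g_t}(\partial M)$ and the identity $A = -\frac{d}{dt}\capac_{g_t}(\partial M)\big|_{t=0}$. Alternatively, I can compute $A$ directly via the flux formula \eqref{eqn_cap} on large spheres.
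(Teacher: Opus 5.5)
Your proposal is correct and follows the paper's proof: the implicit function theorem in the weighted H\"older spaces using the isomorphism of Proposition \ref{prop_estimates}, then the $C^{k-1,\alpha}_{-2\tau-2}$ decay of $\Delta_{g_0}\dot\varphi$ fed into \cite[Proposition 2.6]{Cor2}. Your cutoff $\chi$ plays the role of the paper's affine space ${}^1C^{k+1,\alpha}_{-\tau}(M)$, and your flux/divergence-theorem identification of $A$ supplies detail the paper compresses into ``differentiating the expansion.'' One small point: running the implicit function theorem in the variable $t$ requires $t\mapsto g_t$ to be $C^1$, whereas the paper applies it in the variable $g$ and then composes the resulting $C^1$ solution map with the path, which also covers paths that are merely differentiable.
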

\begin{proof}
The proof of differentiability is a straightforward application of the implicit function theorem for Banach manifolds, which we carry out below. In this proof, we will subtract our usual capacitary potentials from 1, so that they equal 1 on $\partial M$ and decay to 0 at infinity. Let $^1C^{k+1,\alpha}_{-\tau}(M)$ denote the closed Banach submanifold of $C^{k+1,\alpha}_{-\tau}(M)$ consisting of functions that restrict to 1 on $\partial M$, whose tangent space may be naturally identified with $^0C^{k+1,\alpha}_{-\tau}(M)$, the closed Banach subspace of $C^{k+1,\alpha}_{-\tau}(M)$ consisting of functions that restrict to 0 on $\partial M$.

Let $F: \M^{k,\alpha}(M) \times \! \,^1C^{k+1,\alpha}_{-\tau}(M) \to C^{k-1,\alpha}_{-\tau-2}(M)$ be the map:
$$F(g,\psi) = \Delta_g (\psi).$$

We recall that for a differentiable path of Riemannian metrics $g_t$ with $\frac{dg_t}{dt}|_{t=0} = h$, the variation of the Laplacian for a fixed function $f$ is:
\begin{equation}
\label{eqn_var_lap}
\left.\frac{\partial}{\partial t} \Delta_{g_t} f\right|_{t=0} = - \langle \Hess f, h \rangle - \langle \Div(h) - \frac{1}{2} d(\tr h), df \rangle,
\end{equation}
where the pairing, Hessian, divergence, and trace are with respect to $g_0$. This formula appears, for example, as Exercise 2.31 in \cite{CLN}.
From this formula, it is straightforward to verify that $F$ is continuously Fr\'echet differentiable\footnote{We recall that to show a function is continously Fr\'echet differentiable, it suffices to verify that the directional (Gateaux) derivative exists and is continuous on a neighborhood; see, for example, Proposition 3.2.15 of \cite{DM}.}.

Let $g_0 \in \M^{k,\alpha}(M)$, and let $\varphi_0$ be its capacitary potential. By Proposition \ref{prop_expansion}, $1-\varphi_0 \in C^{k+1,\alpha}_{-\tau}(M)$, so $F(g_0, 1-\varphi_0)=0$.  Now consider the derivative map $\psi \mapsto DF_{(g_0,\varphi_0)}(0,\psi)$ from $^0C^{k+1,\alpha}_{-\tau}(M)$ to $C^{k-1,\alpha}_{-\tau-2}(M)$, i.e., $DF_{(g_0,\varphi_0)}(0,\psi) = \Delta_{g_0} \psi$. By Proposition \ref{prop_estimates}, this map is an  isomorphism.

By the implicit function theorem, there exists a continuously Fr\'echet differentiable map $w$ from 
a neighborhood of $g_0$ in $\M^{k,\alpha}(M)$ to a neighborhood of $\varphi_0$ in $^1C^{k+1,\alpha}_{-\tau}(M)$ such that $\Delta_g(w(g))=0$ for all $g$ in the neighborhood. In particular, 
$1-w(g)$ is the capacitary potential  for $g$. We conclude that if $g_t$ is a differentiable path of Riemannian  metrics in $\M^{k,\alpha}(M)$, then $w(g_t)=1-\varphi_t$ is a differentiable path in $C^{k+1,\alpha}_{-\tau}(M)$. It follows that $\psi:=\left.\frac{d}{dt} \varphi_t \right|_{t=0} \in C^{k+1,\alpha}_{-\tau}(M)$. 

To obtain $O(r^{-1})$ decay of $\psi$ from the known $O(r^{-\tau})$ decay, we compute the $g_0$ Laplacian of $\psi$. Differentiating $\Delta_{g_t} \varphi_t = 0$ and using \eqref{eqn_var_lap}, we obtain
$$\Delta_{g_0} \psi = \langle \Hess \varphi, h \rangle + \langle \Div(h) - \frac{1}{2} d(\tr h), d\varphi\rangle.$$
Since $1-\varphi \in C^{k+1,\alpha}_{-\tau}(M)$ and $ h_{ij}  \in C^{k,\alpha}_{-\tau}(M)$, we see that the right-hand side belongs to $C^{k-1,\alpha}_{-2\tau -2}(M)$. Since $\tau > \frac{1}{2}$, $\beta := 2\tau + 2 > 3$. Then by Proposition 2.6 of \cite{Cor2}, we have
$$\psi(x) = \frac{A}{|x|} + O_{k+1,\alpha}(|x|^{-\gamma})$$
for some constant $A \in \R$ and $\gamma > 1$. The expression for $A$ follows from differentiating the expansion at infinity for $\varphi_t$.
\end{proof}

\medskip

We next point out that using metric tensors in the fixed coordinate chart (as in the definition of $\M^{k,\alpha}(M)$) is essentially equivalent to using abstract AF metrics on $M$ (as in Definition \ref{def_AF}). The following remark explains the basic idea, with the full proof of  equivalence given in Lemma \ref{lemma_regularity} in the appendix.

\begin{remark}
\label{rmk_diffeo}
If $(N,g)$ is an AF extension of $(\Sigma, \gamma, H)$ as in Definition \ref{def_extension}, we claim $g$ is isometric to a Riemannian metric $h$ with $h_{ij} - \delta_{ij}$ decaying to zero as $|x| \to \infty$ (i.e., in the fixed coordinate chart).  This follows from the observation that a diffeomorphism $\Phi: N \setminus K \to \mathbb{R}^3 \setminus B(0,r)$ as in the definition of asymptotically flat, with $\overline\Omega$ contained in the interior of $B(0,r)$, can be smoothly extended to a diffeomorphism $\hat \Phi: N \to \R^3 \setminus \Omega = M$ (which follows from standard results in differential topology). Then $\hat \Phi_* g$ is the desired metric $h$. Moreover, if $(N,g)$ is admissible, so is $(M, \hat \Phi_* g)$.  In particular, for the problem of maximizing capacity on admissible AF extensions, it suffices work in the fixed chart. Lemma \ref{lemma_regularity} shows that, in the definition of $\C$, the specifics of the regularity and decay (i.e., Definition \ref{def_AF} vs. $\M^{k,\alpha}(M)$) are immaterial.
\end{remark}

To conclude this section, we recall that the linearization of the scalar curvature map $s:\M^{k,\alpha}(M) \to C^{k-2,\alpha}_{-\tau -2}(M)$ (for $k \geq 2$), $g \mapsto R_g$, at a Riemannian metric $g$ is given by:
\begin{equation}
\label{eqn_L_g}
L_g(h) = -\Delta_g(\tr_g(h)) + \Div(\Div(h)) - g(\Ric_g,h). 
\end{equation}

The formal $L^2$-adjoint of $L_g$ is given by
$$L_g^*u = \Hess_g(u) -(\Delta_g u) g - u\Ric_g.$$
We refer the reader to \cite{Cor} for a thorough account of $L_g$ and $L_g^*$.

\section{Variational characterization of maximum capacity extensions}
\label{sec_results}

In this section we study the geometry of maximum capacity extensions, i.e., admissible AF extensions of some Bartnik data that achieve the maximum possible value of capacity.

We make a preliminary simple but key observation, analogous to the observation in \cite{Cor} that Bartnik mass-minimizers must be scalar flat. A different argument from that in \cite[Theorem 8]{Cor} is necessary, as the global deformation in that case decreases the capacity (and therefore does not produce a contradiction).
\begin{prop}
\label{prop_ZSC}
If $(M,g)$ is a maximal capacity extension of some Bartnik data $(\Sigma, \gamma, H)$, then $g$ has identically zero scalar curvature.
\end{prop}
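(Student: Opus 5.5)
The plan is to argue by contradiction using a \emph{local} conformal deformation that pushes the metric up, rather than Corvino's global conformal deformation to zero scalar curvature. As the remark before the statement notes, the global deformation lowers the capacity: solving $-8\Delta_g u + R_g u = 0$ with $u=1$ on $\partial M$ and $u \to 1$ at infinity gives $u \le 1$ by the maximum principle, since $R_g \ge 0$.

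\textbf{Setting up the deformation.} Suppose $(M,g)$ is a maximal capacity extension but $R_g \not\equiv 0$. Since $R_g \ge 0$ is continuous, there is a point $p$ in the interior of $M$ with $R_g(p) > 0$. Hence there is a small closed geodesic ball $B \subset M \setminus \partial M$ around $p$ on which $R_g \ge c > 0$. Fix a smooth function $v \ge 0$, supported in $B$ and not identically zero, and set $u = 1 + \epsilon v$ and $\tilde g = u^4 g$. In dimension three the conformal scalar curvature formula gives
$$R_{\tilde g} = u^{-5}\left(-8\Delta_g u + R_g u\right) = u^{-5}\left(-8\epsilon \Delta_g v + R_g(1+\epsilon v)\right).$$
Outside $B$ this equals $R_g \ge 0$. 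On $B$ it is at least $u^{-5}\left(c - 8\epsilon \sup|\Delta_g v|\right) > 0$ once $\epsilon$ is small. Because $\tilde g = g$ near $\partial M$ and outside a compact set, $\tilde g$ is again an AF extension in $\M^{k,\alpha}(M)$. It has the same boundary metric and the same boundary mean curvature, so it is admissible.

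\textbf{Comparing capacities.} In three dimensions $|\nabla \psi|^2_{\tilde g}\, dV_{\tilde g} = u^{-4}|\nabla\psi|_g^2\, u^{6} dV_g = u^2 |\nabla \psi|^2_g\, dV_g$. Since $u \ge 1$, every admissible test function $\psi$ in the definition of capacity has $\tilde g$-energy at least its $g$-energy. Let $\tilde\varphi$ be the capacitary potential of $\tilde g$. Then
$$4\pi\,\capac_{\tilde g}(\partial M) = \int_M u^2 |\nabla\tilde\varphi|_g^2\, dV_g \ \ge\ \int_M |\nabla \tilde\varphi|^2_g\, dV_g \ \ge\ 4\pi\,\capac_g(\partial M).$$
Maximality of $g$ forces equality throughout. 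Equality in the second inequality, together with uniqueness of the minimizer, gives $\tilde\varphi = \varphi$. Equality in the first then gives $\nabla\varphi = 0$ on the open set where $v > 0$ (there $u > 1$).

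\textbf{The main obstacle.} The step needing care is ruling out this last possibility. Since $\varphi$ is $g$-harmonic, unique continuation for harmonic functions (or real-analyticity in harmonic coordinates) shows that $\varphi$ would be constant on all of the connected manifold $M$. This contradicts $\varphi = 0$ on $\partial M$ and $\varphi \to 1$ at infinity. Hence $\capac_{\tilde g}(\partial M) > \capac_g(\partial M)$, contradicting maximality, and therefore $R_g \equiv 0$. The only other thing to confirm is the minor regularity bookkeeping that $\tilde g$ lies in the same class as $g$, which is immediate because the perturbation is smooth and compactly supported.
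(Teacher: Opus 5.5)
Your proof is correct and is essentially the paper's argument: a compactly supported conformal bump $(1+t\rho)^4 g$ placed where $R_g>0$ preserves admissibility. The factor $(1+t\rho)^2\ge 1$ in the Dirichlet energy, combined with unique continuation for the nonconstant harmonic potential, then forces a strict increase in capacity. The only cosmetic difference is how you get strictness: you analyze the equality case and use uniqueness of the minimizer to get $\tilde\varphi=\varphi$, whereas the paper applies unique continuation directly to the $g_t$-harmonic potential $\varphi_t$.
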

\begin{proof}
Let $(M,g)$ be a maximal capacity extension of $(\Sigma, \gamma, H)$; in particular, $(M,g)$ is admissible.
Suppose $R_g(p) > 0$ for some $p \in M$; without loss of generality, we may assume $p$ lies in the interior of $M$. Then there exists an open set $U$ compactly contained in the interior of $M$ and $\epsilon > 0$ such that $R_g \geq \epsilon$ on $\overline{U}$. Consider a smooth path of  conformal metrics $g_t = (1+t\rho)^4 g,$ where $\rho \geq 0$ is a smooth bump function that is not identically zero and is supported in $U$. Note that $g_t$ is asymptotically flat and agrees with $g$ outside $U$, and in particular near $\partial M$.
Moreover, for $t > 0$ sufficiently small, $R_{g_t} \geq 0$, so that $g_t$ is an admissible extension of  $(\Sigma, \gamma, H)$.

We claim that the capacity of $\Sigma$ in $(M,g_t)$  strictly exceeds $\C(\Sigma, \gamma,H)$, which will be a contradiction and complete the proof. For $t > 0$, let $\varphi_t$ be the capacitary potential of $\Sigma$ in $(M,g_t)$. Then for $t$ small:
\begin{align*}
\capac_{g_t}(\partial M) &= \frac{1}{4\pi} \int_{M} |d \varphi_t|^2_{g_t} dV_{g_t}\\
&=  \frac{1}{4\pi} \int_{M} |d \varphi_t|^2_{g} (1+t\rho)^2 dV_{g}\\
&>  \frac{1}{4\pi} \int_{M} |d \varphi_t|^2_{g} dV_{g}\\
&\geq \capac_g(\partial M)\\
&= \C(\Sigma, \gamma,H),
\end{align*}
where strict inequality occurs because $\varphi_t$, being $g_t$-harmonic and non-constant, cannot have its gradient vanish on the open set $\{\rho >0\}$.
\end{proof}

It is known that an AF admissible extension minimizing the Bartnik mass (thus satisfying the $H \geq H_{\partial M}$ boundary condition) actually satisfies $H = H_{\partial M}$ (see \cite{Miao_boundary} and \cite{AJ}). By the same intuition given in \cite{Miao_boundary} (that any strict jump in mean curvature ought be exchangeable for positive scalar curvature in the interior), it is reasonable to expect the same conclusion to hold for maximal capacity extensions. We show a partial result in this direction:
\begin{prop}
Suppose $(M,g)$ is a maximal capacity extension of some Bartnik data $(\Sigma, \gamma, H)$. Then $H>H_{\partial M}$ is impossible, i.e. $H_{\partial M} = H$ at at least one point.
\end{prop}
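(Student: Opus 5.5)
Suppose, for contradiction, that $(M,g)$ is a maximal capacity extension with $H > H_{\partial M}$ everywhere on $\Sigma$. By compactness of $\Sigma$ there is then $\delta>0$ with $H - H_{\partial M} \geq \delta$. The strategy, in the spirit of Proposition \ref{prop_ZSC}, is to build a competitor metric that is still admissible but has strictly larger capacity. The competitor will be a conformal deformation that is nontrivial up to the boundary yet leaves the boundary metric fixed. Concretely, take $g_t = (1+t\rho)^4 g$ with $\rho\geq 0$ smooth and compactly supported in a collar neighborhood of $\partial M$, $\rho = 0$ on $\partial M$, and normal derivative $\partial_\nu \rho > 0$ on $\partial M$.

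First I check that $g_t$ remains an extension. Since $\rho|_{\partial M}=0$, the induced metric on $\partial M$ is still $\gamma$. The metric $g_t$ is AF because it agrees with $g$ outside a compact set.

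Next I check the mean curvature condition. Under $\tilde g = u^4 g$ with $u = 1+t\rho$, the boundary mean curvature transforms as
\[
H_{\tilde g} = u^{-2}\left(H_g + 4u^{-1}\partial_\nu u\right),
\]
which on $\partial M$ becomes $H_{\partial M} + 4t\,\partial_\nu\rho$. For $t>0$ small this is still $\leq H$, because the gap $\delta$ absorbs an $O(t)$ change.

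The scalar curvature is the step I expect to be the main obstacle. By Proposition \ref{prop_ZSC}, $R_g\equiv 0$, so
\[
R_{g_t} = -8u^{-5}\Delta_g u = -8t\,u^{-5}\Delta_g\rho,
\]
and I need $\Delta_g \rho \leq 0$ on the collar. A nonnegative, compactly supported function cannot be superharmonic everywhere, so this must be handled with care. The first thing to try is to let $t$ have either sign and flip the sign of $\rho$'s normal derivative. Better still, use the freedom in the mean curvature condition directly: take $u$ to be $g$-harmonic, which keeps $R=0$ exactly. For instance, set $u = 1 + t(1-\varphi)$ with $\varphi$ the capacitary potential. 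Then $u=1+t$ on $\partial M$, which rescales the boundary metric, so instead I will take $u = 1 - t w$ where $w$ is $g$-harmonic with $w=0$ on $\partial M$ and $w\to 1$ at infinity, i.e. $w=\varphi$, and then rescale. The cleanest version is $u_t = (1+ t\varphi)/(1+t)$ in reverse. The real choice to make is a harmonic $u$ with $u=1$ on $\partial M$, $u\to c$ at infinity; the only option is $u = 1 + s\varphi$, giving $u \to 1+s$. Then $\tilde g$ is AF after the homothety of the coordinates at infinity, which does not affect admissibility. Its boundary mean curvature is $H_{\partial M} + 4s\,\partial_\nu\varphi$, which is admissible for small $s>0$ by the $\delta$-gap.

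Finally I compute the capacity of $\tilde g = (1+s\varphi)^4 g$. Conformal invariance of the conformal Laplacian with $R=0$ gives that $\tilde\varphi = \varphi/(1+s\varphi)\cdot (1+s)$ is $\tilde g$-harmonic, with $\tilde \varphi=0$ on $\partial M$ and $\tilde\varphi\to1$ at infinity. Reading the capacity off the $1/|x|$ expansion at infinity, after accounting for the rescaling of the asymptotic coordinates by the factor $(1+s)^2$, should give
\[
\capac_{\tilde g}(\partial M) = (1+s)^2\capac_g(\partial M) - s(1+s)\capac_g(\partial M)\cdots;
\]
this is exactly the Schwarzschild computation from Proposition \ref{prop_round}, and I expect it to come out as $(1+s)\capac_g(\partial M)$. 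That is strictly larger than $\capac_g(\partial M)$ for $s>0$, contradicting maximality. Verifying this capacity formula carefully, including the asymptotic normalization, is the step I would check most closely.
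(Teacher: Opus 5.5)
Your final argument is the paper's proof with $c=1+s$, and it is correct. You conformally deform by the harmonic function $1+s\varphi$: this fixes the boundary metric, keeps $R\ge 0$, changes the boundary mean curvature by $4s\,\partial_\nu\varphi$ (absorbed by the gap for small $s$), and strictly increases the capacity. The value $(1+s)\capac_g(\partial M)$ you expected is right and is quickest to confirm from the boundary flux, since $u=1$ on $\partial M$ gives $\partial_\nu\tilde\varphi=(1+s)\partial_\nu\varphi$ there; the paper instead only notes that $u_c>1$ in the interior strictly increases the Dirichlet energy, and in a write-up you should simply drop the abandoned compactly supported $\rho$ attempt.
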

\begin{proof}
Suppose $(M,g)$ is an admissible extension of Bartnik data $(\Sigma, \gamma, H)$ with $H_{\partial M} < H$.
Let $\varphi$ be the capacitary potential of $(M,g)$. For a parameter $c>1$, let $u_c = 1 + (c-1)\varphi$. Thus, $\Delta_g u_c = 0$, $u_c=1$ on $\partial M$, and $u_c \to c$ at infinity. By the maximum principle, $u_c>1$ except on $\partial M$. The conformal metric $ g_c = u_c^4 g$ is AF with nonnegative scalar curvature, and its boundary is isometric to $(\Sigma, \gamma)$.   Moreover, since $u_c > 1$ in the interior of $M$, the capacity of $\partial M$ with respect to $g_c$ is strictly larger than with respect to $g$.  The mean curvature of $\partial M$ with respect to $g_c$ is given by $H_{\partial M} + 4\partial_{\nu}(u_c) = H_{\partial M} + 4(c-1)\partial_{\nu}(\varphi)$. In particular, for $c>1$ sufficiently close to 1, this mean curvature is still less than $H$, and hence $(M,g_c)$ is an admissible extension. It follows that $(M,g)$ cannot be a maximal capacity extension.
\end{proof}

\medskip

At this point, we reformulate the definition of the maximum capacity $\C$ used in sections \ref{sec_intro} and \ref{sec_finite} so as to be phrased in terms of weighted H\"older-regular AF metrics. This is for analytical convenience. For the rest of this section, we use the definition (where we reiterate the assumption $M=\R^3 \setminus \Omega$ from section \ref{sec_technical}):
\begin{equation}
\label{eqn_C_tilde}
\C(\Sigma, \gamma, H) = \sup_{(g)} \left\{ \capac_g(\partial M) \; : \; g \in \M^{k,\alpha}_{-\tau}(M), R_g \geq 0, g|_\Sigma = \gamma, \text{ and } H \geq H_{\partial M} \right\}.
\end{equation}
In the appendix, we show that the old and new definitions of $\C$ are equivalent (Lemma \ref{lemma_regularity}).

Our goal in this section is to derive the variational property satisfied by maximum capacity extensions, similar to how Bartnik mass minimizers are well-known to be static vacuum.
We approach this using a Lagrange multiplier argument, as in \cite{AJ}, following a heuristic suggested by Bartnik \cite{Ba_phase}. That is, we seek to find critical points of the capacity on AF metrics that live on the constraint set of scalar-flat metrics (since maximizers are necessarily scalar-flat by Proposition \ref{prop_ZSC}). A necessary ingredient in this is a general formula for how the capacity behaves under an infinitesimal deformation of the metric:

\begin{prop}
\label{prop_Dcap}
The capacity functional $\F: \M^{k,\alpha}(M) \to \R$, $\F(g) = 4\pi \capac_g(\partial M)$, is continuously Fr\'echet differentiable.  
The $L^2$-gradient of $\F$ at $g \in \M^{k,\alpha}(M)$ is given by 
$$-d\varphi \otimes d\varphi + \frac{1}{2} |\nabla \varphi|^2 g,$$ 
where $\varphi$ is the capacitary potential for $(M,g)$.
\end{prop}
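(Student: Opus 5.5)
Differentiability of $\F$ comes from Lemma \ref{lemma_path}. There, via the implicit function theorem, the map $g \mapsto 1-\varphi_g$ was shown to be continuously Fr\'echet differentiable from a neighborhood in $\M^{k,\alpha}(M)$ into $C^{k+1,\alpha}_{-\tau}(M)$. I would write
$$\F(g) = \int_M |d\varphi_g|_g^2\, dV_g = \int_M g^{ij}\partial_i\varphi_g \partial_j \varphi_g \sqrt{\det g}\, dx.$$
The integrand depends smoothly and pointwise on $(g, d\varphi_g)$. Both $g^{ij}\sqrt{\det g}$ and $d\varphi_g$ vary continuously differentiably in weighted spaces, with $|d\varphi_g| = O(|x|^{-\tau-1})$. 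The integrand is therefore $O(|x|^{-2\tau-2})$, which is integrable since $2\tau+2>3$, and the same bound holds for its derivatives in $g$. Hence $\F$ is a composition of $C^1$ maps with a bounded linear integration functional on $C^0_{-2\tau-2}$, and is $C^1$. The only care needed is to check that $L^1$ integration is continuous on $C^0_{-\beta}(M)$ for $\beta>3$; this is immediate.

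For the gradient, I would take a path $g_t$ with $\dot g = h$ and $\dot\varphi=\psi$ at $t=0$, and differentiate:
$$\frac{d}{dt}\F(g_t)\Big|_{t=0} = \int_M \Big(-h(\nabla\varphi,\nabla\varphi) + \tfrac12 |\nabla\varphi|^2 \tr_g h\Big) dV_g + 2\int_M \langle d\varphi, d\psi\rangle\, dV_g.$$
The first integral equals $\int_M \langle h, -d\varphi\otimes d\varphi + \frac12|\nabla\varphi|^2 g\rangle\, dV_g$, which is the claimed gradient. The proof therefore reduces to showing that the second integral vanishes; this is the main step. Since $\varphi=0$ on $\partial M$ for every $t$, we get $\psi=0$ on $\partial M$. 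I would integrate by parts on the region $\{|x|\le r\}$ and use $\Delta_g\varphi=0$:
$$\int_{M\cap\{|x|\le r\}} \langle d\varphi,d\psi\rangle\, dV = \int_{S_r} \psi\, \partial_\nu \varphi\, dA - \int_{\partial M}\psi\,\partial_\nu\varphi\, dA.$$
The boundary term on $\partial M$ vanishes. For the term at infinity I need decay sharper than $C^{k+1,\alpha}_{-\tau}$, since $\tau+(\tau+1)$ need not exceed $2$. Here I would invoke the refined expansions: Lemma \ref{lemma_path} gives $\psi = A/|x| + O(|x|^{-\gamma})$, and Lemma \ref{prop_expansion} gives $\partial_\nu\varphi = O(|x|^{-2})$. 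The integrand is then $O(r^{-3})$ while the area of $S_r$ grows like $r^2$, so the term tends to $0$ as $r\to\infty$. This use of the $1/|x|$ decay is exactly why Lemma \ref{lemma_path} was proved with that expansion.

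An alternative reading of the same step is that $\varphi_t$ minimizes the Dirichlet energy over functions with the fixed boundary values. The first variation with respect to $\varphi$ therefore vanishes, and the integration-by-parts argument above is the rigorous version of this. Finally, I would note that the gradient tensor lies in $C^{k,\alpha}_{-2\tau-2}$, so pairing it against $h \in C^{k,\alpha}_{-\tau}$ is integrable. This identifies it as the $L^2$-gradient, meaning $D\F_g(h) = \int_M \langle G, h\rangle\, dV_g$ with $G = -d\varphi\otimes d\varphi + \frac12|\nabla\varphi|^2 g$, which completes the proof.
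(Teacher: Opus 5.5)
Your proposal is correct and follows essentially the same route as the paper: $C^1$-dependence of $\varphi_g$ on $g$ from the implicit-function argument of Lemma \ref{lemma_path}, the first-variation formula, and eliminating the cross term $2\int_M\langle d\varphi,d\psi\rangle\,dV_g$ by integrating by parts using $\psi|_{\partial M}=0$, $\Delta_g\varphi=0$, and decay at infinity. Your differentiability argument is simply a more detailed version of the paper's one-line justification. One side remark needs correcting: since $\tau>\frac12$, one has $\tau+(\tau+1)=2\tau+1>2$, so the crude bounds $\psi=O(|x|^{-\tau})$ and $|d\varphi|=O(|x|^{-\tau-1})$ already give $\int_{S_r}\psi\,\partial_\nu\varphi\,dA=O(r^{1-2\tau})\to 0$, and the refined expansions are convenient but not actually needed.
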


\begin{remark}
\label{rmk_stress}
The tensor $-d\varphi \otimes d\varphi + \frac{1}{2} |\nabla \varphi|^2 g$ equals the stress-energy tensor of the function $\varphi: M \to \R$ as defined by Baird and Eells within the context of harmonic maps \cite{BE}. Indeed, for a map $f:(M,g) \to (N,h)$ between Riemannian manifolds, the stress-energy tensor is $S_f = \frac{1}{2} |df|^2g - f^*h$. This reduces as claimed when $(N,h)$ is the real line and $f=\varphi$. The fact that $\varphi$ is a harmonic map (i.e., a harmonic function) implies that $S_\varphi$ is divergence-free, a fact we will use in Proposition \ref{prop_CSC}.
\end{remark}

\begin{proof}
From the proof of Lemma \ref{lemma_path}, the assignment $g \mapsto 1-\varphi_g$, where $\varphi_g$ is the capacitary potential with respect to $g$, is continuously Fr\'echet differentiable. Using \eqref{eqn_cap}, one can see that $\mathcal{F}$ itself is continuously Fr\'echet differentiable.

We now determine the derivative of $\F$. Let $\{g_t\}$ be a differentiable path of AF metrics in $\M^{k,\alpha}(M)$ with $g_0=g$, and let $h=\frac{\partial}{\partial t}g_t|_{t=0}$.
Let $\varphi_t$ be the corresponding capacitary potentials, referring to $\varphi_0$ as $\varphi$. From Lemma \ref{lemma_path}, $1-\varphi_t$ is a differentiable path of functions in $C^{k+1,\alpha}_{-\tau}(M)$.  Let $\psi = \frac{\partial}{\partial t}\varphi_t|_{t=0}$, and note $\psi|_{\partial M} = 0$.
We have (letting $\nabla$ refer to the gradient with respect to $g$):
\begin{align*}
\frac{d}{dt}\left.4\pi \capac_{g_t}(\partial M) \right|_{t=0} &= \frac{d}{dt}\left. \int_M |d\varphi_t|^2_{g_t} dV_{g_t} \right|_{t=0}\\
&=\int_M \left(2g(\nabla \varphi, \nabla \psi) - h(\nabla \varphi, \nabla \varphi) + \frac{1}{2}|\nabla \varphi|_g^2 \tr_g(h) \right) dV_{g}.
\end{align*}
We claim the first term (which is clearly integrable) has vanishing integral. Since $\psi=0$ on $\partial M$ and $\Delta \varphi = 0$, by the divergence theorem we have
\begin{align*}
\int_M   g(\nabla \varphi, \nabla \psi) dV_g&= \int_{M} \Div_g(\psi \nabla \varphi) dV_g\\
 &= \lim_{r \to \infty} \int_{S_r} \psi \partial_{\nu}(\varphi) dA.
\end{align*}
Since $ \lim_{r \to \infty} \int_{S_r}  \partial_{\nu}(\varphi) dA = \int_{\partial M}  \partial_{\nu}(\varphi) dA$ is finite and $\psi = O(|x|^{-1})$, the above limit equals zero.

Thus we have:
$$\frac{d}{dt}\left.4\pi\capac_{g_t}(\partial M) \right|_{t=0} = \int_M \left\langle -d\varphi \otimes d\varphi + \frac{1}{2}|\nabla \varphi|^2 g, h\right\rangle dV_g$$
where $\langle, \rangle$ is the inner product with respect to $g$. 
\end{proof}

In Proposition \ref{prop_cap_varying_boundary} in the Appendix, we point out that Proposition \ref{prop_Dcap} can be used to derive the variation of capacity of a smooth flow of hypersurfaces in a fixed AF manifold.

Recalling $\M^{k,\alpha}(M)$ is the Banach manifold of AF metrics on $M$, we let $\M^{k,\alpha}_{(\gamma,H)}(M)$ be the closed Banach submanifold consisting of those elements that induce Bartnik boundary data $(\gamma, H)$.
We let $\Z^{k,\alpha}_{(\gamma,H)}(M)$ the subset whose scalar curvature is zero on $M$. In \cite{AJ} it was proved that  $\Z^{k,\alpha}_{(\gamma,H)}(M)$ is a smooth Banach submanifold\footnote{This was subsequently generalized by Z. An to establish that initial data sets with fixed spacetime Bartnik boundary data and satisfying the Einstein constraint equations for fixed energy and momentum densities also form a smooth Banach manifold \cite{An}.} of $\M^{k,\alpha}_{(\gamma,H)}(M)$. 

As above, let $\F: \M^{k,\alpha}(M) \to \R$ be the functional $\F(g) = 4\pi\capac_g(\partial M)$.

\begin{thm}
\label{thm_critical}
Suppose $g \in \Z^{k,\alpha}_{(\gamma,H)}(M)$ is a critical point for the restriction of $\F$ to $\Z^{k,\alpha}_{(\gamma,H)}(M)$. (In other words, the metric $g$ is a critical point for the capacity when restricted to scalar-flat metrics inducing boundary data $(\gamma, H)$ on $\partial M$.) Then there exists a  function $u \in C^{k+1,\alpha}_{loc}(M)$
 on $M$ such that
\begin{equation}
\label{eqn_HS_AF}
\Hess(u) - (\Delta u)g -u\Ric = -d\varphi \otimes d\varphi +\frac{1}{2} |\nabla \varphi|^2 g,
\end{equation}
where $\varphi$ is the capacitary potential of $(M,g)$, and the Hessian, Laplacian, and Ricci curvature are with respect to $g$. 
If there are two distinct solutions $u$ to \eqref{eqn_HS_AF}, then $g$ is static vacuum.
\end{thm}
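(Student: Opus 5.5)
The plan is a Lagrange multiplier argument on the constraint manifold, following \cite{AJ}. Consider the scalar curvature map $s:\M^{k,\alpha}_{(\gamma,H)}(M) \to C^{k-2,\alpha}_{-\tau-2}(M)$. By \cite{AJ}, $s$ is a submersion at scalar-flat metrics: its linearization $L_g$ is surjective with split kernel on the space of variations $h$ that preserve the boundary data $(\gamma,H)$. This is exactly the fact used there to show that $\Z^{k,\alpha}_{(\gamma,H)}(M)=s^{-1}(0)$ is a Banach submanifold with tangent space $\ker L_g$. At the critical point $g$, the differential $D\F_g$ therefore vanishes on $\ker L_g$. By Proposition \ref{prop_Dcap}, $D\F_g(h)=\int_M \langle S, h\rangle\, dV_g$, where
$$S=-d\varphi\otimes d\varphi+\tfrac12|\nabla\varphi|^2g.$$
Since $1-\varphi\in C^{k+1,\alpha}_{-\tau}$, we have $|S|=O(|x|^{-4})$, so $S$ defines a bounded functional on $C^{k,\alpha}_{-\tau}$ variations.

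By the Banach space Lagrange multiplier theorem, a bounded functional vanishing on the kernel of a surjection with split kernel factors through the surjection. Hence there is a bounded linear functional $\lambda$ on $C^{k-2,\alpha}_{-\tau-2}(M)$ with $D\F_g=\lambda\circ L_g$.

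The main obstacle is showing that $\lambda$ is represented by a function $u$ and that $u$ is regular. I would follow the argument of \cite{AJ}. First test against compactly supported $h$ in the interior, which automatically preserve the boundary data. This gives $L_g^*\lambda = S$ in the distributional sense. The key point is that $L_g^*$ is overdetermined elliptic: tracing gives $-2\Delta u - uR = \tr S$, so $\Delta u = -\tfrac14|\nabla\varphi|^2$ since $R=0$. Local elliptic regularity (using the trace equation and then the full system) should then show that the distribution $\lambda$ is a function $u\in C^{k+1,\alpha}_{loc}(M)$ satisfying \eqref{eqn_HS_AF} in the interior, and by continuity up to $\partial M$.

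I need a version of the \cite{AJ} regularity lemma for an inhomogeneous right-hand side. Since $S$ is $C^{k,\alpha}$, the standard bootstrap goes through. Boundary terms from non-compactly supported $h$ are not needed for the conclusion as stated, though they would yield boundary conditions on $u$. Asymptotic behavior of $u$ at infinity is also not required by the statement.

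For the last claim, suppose $u_1\neq u_2$ both solve \eqref{eqn_HS_AF}. Subtracting, the difference $w=u_1-u_2\neq0$ satisfies $L_g^*w=0$. Together with $R_g=0$, this is precisely the static equation \eqref{static}, so $g$ is static vacuum. This step is immediate by linearity.
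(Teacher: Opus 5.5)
Your proposal is correct and follows essentially the same route as the paper. You use the Lagrange multiplier from the surjectivity of $L_g$ in \cite{AJ}. You then use the trace to represent $\lambda$ by a function $u$; the paper does this by testing with $h=fg$ to get $\lambda(\Delta f)=-\frac{1}{4}\int_M|\nabla\varphi|^2 f\,dV_g$, subtracting a solution $w$ of $\Delta w=-\frac{1}{4}|\nabla\varphi|^2$, and applying Weyl's lemma. Next, compactly supported $h$ give the harmonic-static equation, and linearity gives the final static vacuum claim.

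The one point to make explicit is regularity at $\partial M$: it does not follow from interior elliptic regularity plus ``continuity.'' As in the paper, it comes from the traced form $\Hess(u)=u\Ric-d\varphi\otimes d\varphi+\frac{1}{4}|\nabla\varphi|^2g$, which expresses the full Hessian of $u$ in terms of $u$ and data that are regular up to the boundary.
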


Note that using the trace, equation \eqref{eqn_HS_AF} can be rewritten as:
\begin{equation}
\label{eqn_traced_version}
\Hess(u) = u\Ric -d\varphi \otimes d\varphi + \frac{1}{4}|\nabla \varphi|^2 g,
\end{equation}
since $g$ has zero scalar curvature.

\begin{proof}[Proof of Theorem \ref{thm_critical}]
Given the hypothesis, we may assume $g \in \Z^{k,\alpha}_{(\gamma,H)}(M)$ satisfies $D\F_g(h)=0$ for all variations $h$ tangent to $\Z^{k,\alpha}_{(\gamma,H)}(M)$ at $g$ (i.e. $L_g(h)=0$, 
$h^T=0$, and $H_h'=0$, using the notation from \cite{AJ}). Consider the scalar curvature map $s:\M^{k,\alpha}(M) \to C^{k-2,\alpha}_{-\tau -2}(M)$, which is continuously Fr\'echet differentiable. Let $L_{g_0}$ 
denote the derivative of $s$ at $g_0$. Recall from \cite[Proposition 2.4]{AJ} that $L_{g_0}$ is surjective with splitting kernel. Then by a Lagrange multiplier argument 
(see \cite[Theorem 6.3]{Ba_phase}, for example), there exists bounded linear functional $\lambda$ on $C^{k-2,\alpha}_{-\tau-2}(M)$ such that for all variations $h$ of $g$ that are tangent to (the ``unconstrained'' space)
$\M^{k,\alpha}_{(\gamma,H)}(M)$ (i.e., $h^T=0$, and $H_h'=0$), we have
$$d\F_g(h) = \lambda (L_g(h)).$$
From Proposition \ref{prop_Dcap}, this means
\begin{equation}
\label{eqn_lambda}
\int_M \langle -d\varphi \otimes d\varphi + \frac{1}{2}|\nabla \varphi|^2 g, h \rangle dV_g = \lambda(L_g(h)).
\end{equation}

Note that $\lambda$ restricts to a distribution defined on $C_c^\infty(U)$, where $U$ is the interior of $M$. We argue as follows that $\lambda$ has good regularity. Let $f \in C_c^\infty(U)$, and consider variations of the form $h=fg$ in \eqref{eqn_lambda}. Since $g$ has zero scalar curvature, $L_g(fg) = -2 \Delta f$ by \eqref{eqn_L_g}. Thus, we have
$$\lambda(\Delta f) = -\frac{1}{4} \int_M |\nabla \varphi|^2 f dV_g.$$
Now, let $w$ solve the elliptic problem 
\begin{align*}
\begin{cases}
\Delta w = -\frac{1}{4}  |\nabla \varphi|^2\\
w = 0 \text{ on } \partial M\\
w \to 0 \text{ at infinity}.
\end{cases}
\end{align*}
(A solution $w \in C^{k+1,\alpha}_{-\tau}(M)$ exists by Proposition \ref{prop_estimates}.) Let $\mu$ be the functional on $C_c^\infty(U)$ defined by $\mu(f) = \int_M wf$. In particular, $\lambda - \mu$ is a distribution satisfying
$$(\lambda-\mu)(\Delta f) = 0.$$
That is, $\lambda - \mu$ is a weak solution to Laplace's equation on the interior of $M$. By the Weyl Lemma, $\lambda-\mu$ is represented in the interior of $M$ by pairing with harmonic function. It follows that
$$\lambda(f) = \int_M uf$$
where $\Delta u = -\frac{1}{4}|\nabla \varphi|^2$ and $u$ is in $C^{k+1,\alpha}_{loc}$ in the interior of $M$.

We next show $u$ satisfies \eqref{eqn_HS_AF}. Let $h$ be a smooth symmetric 2-tensor on $M$, compactly supported in the interior. From \eqref{eqn_lambda}, we now have
$$\int_M \langle -d\varphi \otimes d\varphi + \frac{1}{2}|\nabla \varphi|^2 g, h \rangle dV_g = \int_M L_g(h) u dV_g = \int_M \langle L_g^*(u), h\rangle dV_g.$$
It follows that
$$L_g^* u = -d\varphi \otimes d\varphi + \frac{1}{2}|\nabla \varphi|^2 g$$
in the interior of $M$, where we recall $L_g^* u = \Hess(u) - (\Delta u)g -u\Ric$. Rewriting this equation using its trace (as done in \eqref{eqn_traced_version}), we deduce that 
$u$ is in $C^{k+1,\alpha}_{loc}(M)$, i.e., is regular up to the boundary.

If $u' \neq u$ solves \eqref{eqn_HS_AF}, then $u-u' \in \ker L_g^*$, i.e., $u-u'$ is a static potential, so $g$ is static.
\end{proof}

\begin{thm}
\label{thm_main}
Suppose $g \in \M^{k,\alpha}(M)$ is a maximal capacity extension of Bartnik data $(\Sigma, \gamma, H)$. Then there exists $u \in C^{k+1,\alpha}_{loc}(M)$
such that
\begin{equation}
\Hess(u) - (\Delta u)g -u\Ric = -d\varphi \otimes d\varphi +\frac{1}{2} |\nabla \varphi|^2 g,
\end{equation}
where $\varphi$ is the capacitary potential of $(M,g)$.
\end{thm}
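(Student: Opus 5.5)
The plan is to reduce Theorem \ref{thm_main} to Theorem \ref{thm_critical}. I need to show that a maximal capacity extension $g$ lies in $\Z^{k,\alpha}_{(\gamma,H)}(M)$ and is a critical point of $\F$ restricted to that submanifold.

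First, by Proposition \ref{prop_ZSC}, $g$ has $R_g \equiv 0$. Admissibility gives $g|_{\partial M}=\gamma$ and $H_{\partial M}\le H$. Set $H_0 := H_{\partial M}$. Then $(\Sigma,\gamma,H_0)$ is itself Bartnik data, since $H_0\ge 0$; I expect this to be forced anyway, and if $H_0$ were negative somewhere I would replace $H$ by $H_0$ in what follows, since the constraint manifold of \cite{AJ} is defined for arbitrary smooth boundary data. So $g \in \Z^{k,\alpha}_{(\gamma,H_0)}(M)$. I apply Theorem \ref{thm_critical} with $H_0$ in place of $H$. This is harmless, because the conclusion \eqref{eqn_HS_AF} makes no reference to $H$.

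Next I check criticality. Let $h$ be tangent to $\Z^{k,\alpha}_{(\gamma,H_0)}(M)$ at $g$. Since that set is a smooth Banach submanifold by \cite{AJ}, there is a differentiable path $g_t$ in it with $g_0=g$ and $\dot g_0=h$. Each $g_t$ is scalar flat, induces $\gamma$ on $\partial M$, and has boundary mean curvature exactly $H_0\le H$. Hence each $g_t$ is admissible in the sense of \eqref{eqn_C_tilde}, so $\F(g_t)\le 4\pi\,\C(\Sigma,\gamma,H)=\F(g_0)$ for all small $t$ of either sign. Since $\F$ is differentiable (Proposition \ref{prop_Dcap}), the function $t\mapsto \F(g_t)$ has an interior maximum at $t=0$, which gives $D\F_g(h)=0$. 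So $g$ is a constrained critical point, and Theorem \ref{thm_critical} produces $u\in C^{k+1,\alpha}_{loc}(M)$ satisfying the equation.

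The main obstacle is the boundary condition. Fixing the mean curvature at $H_0$ rather than $H$ is what makes two-sided variations available. If one insisted on the constraint $H\ge H_{\partial M}$ directly, one would only get one-sided inequalities at points where $H_{\partial M}=H$. The trick of freezing the boundary data at the maximizer's own $H_0$ sidesteps this. One also needs that the maximizer belongs to $\M^{k,\alpha}_{-\tau}(M)$ with the regularity required by \cite{AJ}; this is part of the hypothesis, with the equivalence of definitions handled by Lemma \ref{lemma_regularity}.
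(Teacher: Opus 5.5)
Your proposal is correct and follows the paper's proof. Like the paper, you freeze the boundary mean curvature at the maximizer's own $H_0 \le H$, use Proposition \ref{prop_ZSC} to place $g$ in $\Z^{k,\alpha}_{(\gamma,H_0)}(M)$, and apply Theorem \ref{thm_critical}; your two-sided path argument for $D\F_g(h)=0$ just makes explicit a step the paper leaves implicit. One minor slip: $H_0 \ge 0$ does not follow from admissibility, but as you note this is harmless, since Theorem \ref{thm_critical} never uses the sign of the boundary mean curvature.
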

\begin{proof}
Let $H_0 \leq H$ be the mean curvature of $\partial M$ with respect to $g$. Then $g$ also achieves of the maximum capacity on the smaller set of admissible extensions whose mean curvature equals $H_0$ exactly. From Proposition \ref{prop_ZSC}, $g$ must have zero scalar curvature, i.e., $g \in \Z^{k,\alpha}(M)$. Putting these factors together, we see that $g$ is a critical point of the capacity function $\F$ on $\Z^{k,\alpha}_{(\gamma,H_0)}(M)$. Now, the claim follows from Theorem \ref{thm_critical}.
\end{proof}

\section{Harmonic-static metrics: local analysis and interpretation}
\label{sec_local}

We recall the definition of static Riemannian metrics, following \cite{Cor}.
\begin{definition}
A Riemannian 3-manifold $(M,g)$ is \emph{static} if there exists a function $u$ (a ``static potential''), not identically zero, such that
\begin{equation}
\label{eqn_static}
\Hess(u) - (\Delta u)g -u\Ric = 0,
\end{equation}
and \emph{static vacuum} if, in addition, $\Delta u =0$.
\end{definition}
Recall the left-hand side of \eqref{eqn_static} is $L_g^*u$, the formal $L^2$-adjoint of the linearized scalar curvature operator, so static simply means the kernel of $L_g^*$ is nontrivial. The geometric interpretation of a solution pair $(g,u)$ in \eqref{eqn_static} is that the Lorentzian metric $-u^2dt^2 + g$  (defined where $u \neq 0$) is Einstein (and Ricci-flat in the vacuum case).

Based on the condition \eqref{eqn_HS_AF} identified in Theorem \ref{thm_critical} in the asymptotically flat case, we make the following more general definition.
\begin{definition}
Consider a Riemannian 3-manifold $(M,g)$ equipped with a $g$-harmonic function $\varphi$. Then $(M,g)$ is
\emph{harmonic-static (with respect to $\varphi$)} if there exists a function $u$ (called the potential), not identically zero, such that
\begin{equation}
\label{eqn_HS}
\Hess(u) - (\Delta u)g -u\Ric = -d\varphi \otimes d\varphi +\frac{1}{2} |\nabla \varphi|^2 g.
\end{equation}
\end{definition}
\medskip
A few simple observations are in order:
\begin{enumerate}
\item  If $\varphi$ is constant (which necessarily holds if $M$ is compact for example), then the harmonic-static equation, \eqref{eqn_HS}, reduces to the static equation, \eqref{eqn_static}.
\item One can view \eqref{eqn_HS}, i.e., $L_g^* u =  -d\varphi \otimes d\varphi +\frac{1}{2} |\nabla \varphi|^2 g$, as an inhomogeneous version of the static equation $L_g^*u=0$.
\item The trace of \eqref{eqn_HS} is:
$$-2\Delta u - uR = \frac{1}{2}|\nabla \varphi|^2.$$
\end{enumerate}

\medskip

We recall from \cite{FM} that static metrics have constant scalar curvature (on each connected component). The same is true for harmonic-static metrics:
\begin{prop}
\label{prop_CSC}
Suppose $(M,g)$ is a connected Riemannian manifold that is harmonic-static with respect to some harmonic function $\varphi$.
Then $g$ has constant scalar curvature. In particular, if $(M,g)$ is asymptotically flat, then $g$ has identically zero scalar curvature.
\end{prop}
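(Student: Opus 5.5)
We mimic the standard argument that static metrics have constant scalar curvature: take the divergence of both sides of \eqref{eqn_HS}. By Remark \ref{rmk_stress}, the right-hand side $S_\varphi = -d\varphi\otimes d\varphi + \frac{1}{2}|\nabla\varphi|^2 g$ is divergence-free because $\varphi$ is harmonic. To check this directly, write
$$\Div(d\varphi\otimes d\varphi) = (\Delta\varphi)\, d\varphi + \nabla_{\nabla\varphi} d\varphi = \frac12 d|\nabla\varphi|^2,$$
where the last equality uses $\Delta\varphi = 0$ and the symmetry of the Hessian. We also have $\Div(\frac12|\nabla\varphi|^2 g) = \frac12 d|\nabla\varphi|^2$, so these two contributions cancel. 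Hence $\Div(L_g^*u)=0$.

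Next we compute $\Div(L_g^*u)$ for an arbitrary function $u$ (this identity is the one used in \cite{FM}). We have:
\begin{itemize}
\item $\Div(\Hess u) = d(\Delta u) + \Ric(\nabla u,\cdot)$, by the Bochner commutation formula;
\item $\Div((\Delta u) g) = d(\Delta u)$;
\item $\Div(u\Ric) = \Ric(\nabla u,\cdot) + \frac{u}{2}\, dR$, by the contracted Bianchi identity.
\end{itemize}
Subtracting gives $\Div(L_g^*u) = -\frac{u}{2}\, dR$. Combining with the previous paragraph yields
$$u\, dR = 0 \quad \text{on } M.$$
The regularity needed here is $u\in C^3$ and $g\in C^3$. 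If we are in a low-regularity setting like $C^{k+1,\alpha}$ with $k=2$, we can either argue distributionally or note that the derivation only requires these identities in the weak sense. I do not expect this to be a real issue.

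The main obstacle is passing from $u\,dR=0$ to $dR=0$, since $u$ may vanish. The identity gives $dR=0$ on the open set $\{u\neq 0\}$, so it suffices to show that $\{u=0\}$ has empty interior. Then $dR=0$ on a dense set, and by continuity everywhere; connectedness of $M$ then gives $R$ constant.

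Suppose $u\equiv 0$ on a nonempty open set $V$. Then \eqref{eqn_HS} on $V$ reads $d\varphi\otimes d\varphi = \frac12|\nabla\varphi|^2 g$. Taking traces gives $|\nabla\varphi|^2 = \frac32|\nabla\varphi|^2$, so $\nabla\varphi = 0$ on $V$ and $\varphi$ is constant on $V$. Since harmonic functions are real analytic in harmonic coordinates (or by unique continuation for $\Delta_g$), $\varphi$ is constant on all of connected $M$. Then \eqref{eqn_HS} reduces to the static equation $L_g^*u = 0$. A static potential vanishing on an open set vanishes identically: along any geodesic the equation $\Hess u = u(\Ric - \frac{R}{2}g)$ (the trace-adjusted form) gives a linear second-order ODE for $u$, so $u$ and $du$ vanishing at a point forces $u\equiv 0$. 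This contradicts the definition, which requires the potential to be not identically zero. Hence $\{u=0\}$ has empty interior, and $R$ is constant.

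Finally, in the asymptotically flat case the order $\tau>\frac12$ gives $R_g = O(|x|^{-\tau-2}) \to 0$ at infinity. A constant with this decay must be zero, so $R\equiv 0$.
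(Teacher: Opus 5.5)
Your proof is correct and follows essentially the paper's argument. The divergence identity gives $u\,dR=0$, and you rule out $u$ vanishing on an open set by forcing $d\varphi=0$ there; you use the trace where the paper uses a rank-one argument. Unique continuation then makes $\varphi$ constant and reduces to the static case. At that point you inline the Fischer--Marsden unique continuation for static potentials rather than citing that static metrics have constant scalar curvature.

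One small correction: harmonic functions need not be real analytic in harmonic coordinates unless the metric itself is analytic. So rely on the unique continuation alternative you mention for $\Delta_g$.
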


\begin{proof}
A direct computation (see \cite{FM} or \cite{Cor}) shows that the divergence of $L_g^*u$ is $-\frac{1}{2} u dR_g$. It is straightforward to calculate that the divergence of 
$-d\varphi \otimes d\varphi +\frac{1}{2} |\nabla \varphi|^2 g$ is zero, since $\varphi$ is $g$-harmonic. Thus $u dR_g \equiv 0$ on $M$. If $R_g$ is non constant, then $u \equiv 0$ on the nonempty open 
set $W=\{dR_g \neq 0\}$. Then $ -d\varphi \otimes d\varphi +\frac{1}{2} |\nabla \varphi|^2 g =0$ on $W$. Since $d\varphi \otimes d\varphi$ has rank at most one, this is 
only possible if $d \varphi=0$ on $W$. Since $\varphi$ is harmonic, this implies $\varphi$ is constant on $M$. Then $L_g^*u=0$, so $g$ is static. As stated above, static metrics have constant scalar curvature on a connected component, a contradiction.
\end{proof}

We now proceed to give a local interpretation of harmonic-static metrics that essentially generalizes their interpretation in Theorem \ref{thm_critical} as critical points of capacity subject to the zero scalar curvature constraint with fixed boundary conditions. To this end, let $\Omega$ be a compact 3-manifold with $\partial \Omega \neq \emptyset$. Let $f \in C^\infty(\partial M)$ be a fixed function. By basic results in elliptic theory, given a Riemannian metric $g$ on $\Omega$, there exists a unique $g$-harmonic function $\varphi_f$ on $\Omega$ that restricts to $f$ on $\partial \Omega$. Let $\gamma$ and $H$ be a fixed Riemannian metric and a fixed smooth function on $\partial \Omega$, respectively. Let $\M_{(\gamma,H)}(\Omega)$ be the set of Riemannian metrics $g$ on $\Omega$ such that $g|_{T\partial \Omega} = \gamma$, and $H_{\partial \Omega} = H$, where $H_{\partial \Omega}$ is the mean curvature of $\partial \Omega$ with respect to $g$, in the direction pointing into $\Omega$.

With boundary data $(\gamma, H)$ and $f$ fixed, we let $\F_{(\gamma, H, f)}: \M_{(\gamma,H)}(\Omega) \to \R$ be defined by
$$\F_{(\gamma, H, f)}(g) = \int_\Omega |\nabla \varphi_f|_g^2 dV_g,$$
where again $\varphi_f$ is $g$-harmonic with Dirichlet boundary data $f$. Note that the right-hand side could also be written as the infimum of $\int_\Omega |\nabla \phi |_g^2 dV_g$ among test functions $\phi$ with $\phi|_{\partial \Omega} = f$. Thus, $\F$ generalizes the capacity in the AF case (in which $f$ is zero on the compact ``inner'' boundary, and there is an ``outer'' boundary at infinity in the AF end where $f \to 1$).

By analogy with the maximum capacity, it is natural to ask if $\F_{(\gamma, H, f)}$ can be maximized on the subset of $\M_{(\gamma,H)}(\Omega)$ consisting of metrics with nonnegative scalar curvature (if this subset is nonempty). It is not clear if this supremum is finite. However, any maximizer would have to have zero scalar curvature by a similar argument as in Proposition \ref{prop_ZSC}. This motivates considering $\F_{(\gamma, H, f)}$ restricted to the subset of $\M_{(\gamma,H)}(\Omega)$ consisting of scalar-flat metrics. We can at last provide a geometric/analytic interpretation of a harmonic-static metric. We omit the analytic details, but the proof is essentially the same as that of Theorem \ref{thm_critical}.

\begin{prop}
Suppose $(\Omega, g)$ is a compact Riemannian 3-manifold with nonempty boundary and zero scalar curvature, and let $\varphi$ be a $g$-harmonic function on $\Omega$. Let $f = \varphi|_{\partial \Omega}$, and let $(\gamma, H)$ be the Bartnik boundary data. Then $(\Omega, g)$ is harmonic-static with respect to $\varphi$ if and only if $g$ is a critical point of $\F_{(\gamma, H, f)}$ on the space of  zero-scalar curvature Riemannian metrics in $\M_{(\gamma,H)}(\Omega)$.
\end{prop}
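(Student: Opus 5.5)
We prove the two directions separately, in the same way as the proof of Theorem \ref{thm_critical}. Throughout, we work in H\"older spaces $C^{k,\alpha}(\Omega)$ on the compact manifold $\Omega$ rather than in weighted spaces. The first step is the analogue of Proposition \ref{prop_Dcap}. For a variation $g_t$ with $\dot g = h$, let $\varphi_t$ be $g_t$-harmonic with $\varphi_t|_{\partial\Omega}=f$, and set $\psi=\dot\varphi$. Then $\psi|_{\partial\Omega}=0$, and the implicit function theorem argument of Lemma \ref{lemma_path} shows that $\psi$ depends differentiably on $h$. Differentiating $\int_\Omega |\nabla\varphi_t|^2_{g_t}dV_{g_t}$ produces the term $2\int_\Omega\langle\nabla\varphi,\nabla\psi\rangle$. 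This term equals $2\int_{\partial\Omega}\psi\,\partial_\nu\varphi=0$ by the divergence theorem and $\Delta\varphi=0$; there is no contribution at infinity now. Hence
$$D\F_g(h)=\int_\Omega\Big\langle -d\varphi\otimes d\varphi+\tfrac12|\nabla\varphi|^2g,\,h\Big\rangle dV_g.$$
Call the tensor in the pairing $S_\varphi$.

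For the \emph{critical point $\Rightarrow$ harmonic-static} direction, we need two inputs. First, the scalar-flat metrics in $\M_{(\gamma,H)}(\Omega)$ must form a Banach submanifold whose tangent space is $\{h: L_g h=0,\ h^T=0,\ H'_h=0\}$. Second, $L_g$ restricted to $\{h^T=0, H'_h=0\}$ must be surjective with splitting kernel. Both are the compact analogues of the results of \cite{AJ} cited above. With these in hand, the Lagrange multiplier theorem gives a bounded functional $\lambda$ with $D\F_g(h)=\lambda(L_g h)$. We then test with $h=fg$ for $f\in C^\infty_c(\mathrm{int}\,\Omega)$, where $L_g(fg)=-2\Delta f$. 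This yields $\lambda(\Delta f)=-\frac14\int|\nabla\varphi|^2 f$. Next, subtract the pairing with a solution $w$ of $\Delta w=-\frac14|\nabla\varphi|^2$ with $w=0$ on $\partial\Omega$; Weyl's lemma then shows $\lambda$ is represented by a function $u$. Testing with arbitrary compactly supported $h$ and integrating by parts gives $L_g^*u=S_\varphi$ in the interior. Finally, the traced form \eqref{eqn_traced_version} gives regularity up to the boundary.

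It remains to check that $u\not\equiv0$. If $u\equiv 0$, then $S_\varphi=0$, and the rank argument from Proposition \ref{prop_CSC} forces $d\varphi=0$. In that degenerate case ($\varphi$ constant) the functional is identically zero, and we interpret the statement accordingly, or note that the condition is vacuous.

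For the \emph{harmonic-static $\Rightarrow$ critical point} direction, suppose $L_g^*u=S_\varphi$ and let $h$ be tangent to the constraint set, i.e.\ $L_gh=0$, $h^T=0$, $H'_h=0$. Then
$$D\F_g(h)=\int_\Omega\langle L_g^*u,h\rangle=\int_\Omega u\,L_gh+\int_{\partial\Omega}B(u,h)=\int_{\partial\Omega}B(u,h),$$
where $B$ is the boundary term in Green's formula for $L_g$. We then use the standard boundary identity (as in \cite{AJ} and the known computations of the variation of scalar curvature with boundary). Under $h^T=0$, this identity expresses $B(u,h)$ as a multiple of $u\,H'_h$ plus terms involving only $h^T$ and its tangential derivatives. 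Therefore $B(u,h)$ vanishes when $h^T=0$ and $H'_h=0$, which gives $D\F_g(h)=0$.

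We expect the main obstacle to be the functional-analytic input in the forward direction: the surjectivity of $L_g$ with the boundary conditions $h^T=0$, $H'_h=0$ on a compact domain. There are no weights at infinity here, so the possible cokernel consists of static potentials satisfying boundary conditions. We would first try to adapt the argument of \cite[Proposition 2.4]{AJ}: conformal variations $h=vg$ with $v=0$ and $\partial_\nu v$ prescribed on $\partial\Omega$ reduce the problem to solving $-2\Delta v=\rho$ with Dirichlet data. The Neumann-type constraint $H'_h=0$ is then handled by adding a correction supported near the boundary. If full surjectivity fails, we would instead use the closed-range version of the Lagrange multiplier argument, which still produces $\lambda$, and absorb any kernel element into $u$.
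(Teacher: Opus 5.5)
Your route is the paper's: its proof is the single remark that the argument of Theorem~\ref{thm_critical} carries over, and your derivative computation, Weyl-lemma step and regularity step are that argument transplanted to $\Omega$. Your converse via Green's formula is correct, and the paper does not write it down. To be precise about it: if $h^T=0$ on $\partial\Omega$, $n$ is the outward normal, $H$ is measured with respect to $n$, and $\omega=h(n,\cdot)|_{T\partial\Omega}$, then
\[
\int_\Omega \big(u\,L_g h-\langle L_g^*u,h\rangle\big)\,dV=-\int_{\partial\Omega}\big(2u\,H'_h+\Div_{\partial\Omega}(u\,\omega)\big)\,dA=-2\int_{\partial\Omega}u\,H'_h\,dA .
\]
So the leftover boundary term is a tangential divergence, not a term in $h^T$. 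Your constant-$\varphi$ remark is also right, but this case is a genuine exception rather than a vacuous one. There $\F_{(\gamma,H,f)}\equiv 0$, so every scalar-flat $g$ is critical, while harmonic-static means static. The ``only if'' direction therefore needs $\varphi$ to be nonconstant.

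The real gap is the input you call a compact analogue of \cite{AJ}. The identity above shows that \emph{every} $u_0\in\ker L_g^*$, with no boundary condition at all, satisfies $\int_\Omega u_0\,L_g h\,dV=0$ for all $h$ with $h^T=0$ and $H'_h=0$. Hence $L_g$ is never surjective on that space when $g$ is static on $\Omega$. This includes the most natural examples, such as any flat domain, where $u_0=1$. In the AF setting this obstruction is removed only by the boundary term at infinity, a linearized-mass flux, which has no compact counterpart. At such $g$ the implicit function theorem gives no manifold structure, and your closed-range fallback does not close the gap. Criticality along curves in a possibly singular constraint set forces $D\F_g$ to vanish only on the tangent cone, which can be smaller than $\{L_gh=0,\ h^T=0,\ H'_h=0\}$. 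Even granting vanishing on that whole space, you still need closed range of $h\mapsto(L_gh,H'_h)$ on $\{h^T=0\}$. That is a Fredholm statement, and your conformal-plus-boundary-correction sketch does not supply it, since the correction reintroduces an $L_g$ error. The paper's one-line proof passes over the same point. To complete the argument, you have two options. First, you can assume $g$ is not static on $\Omega$; then this obstruction disappears, and what remains is to prove the compact version of \cite[Proposition 2.4]{AJ}. Second, you can define ``critical'' as vanishing of $D\F_g$ on the linearized constraint space, which is exactly what your converse produces, and then prove closed range. In that case $u$ is determined modulo $\ker L_g^*$, and the equivalence holds for nonconstant $\varphi$.
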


\section{Examples}
\label{sec_examples}

We recall from Proposition \ref{prop_round} that round spheres with nonnegative constant mean curvature have maximal capacity extensions given by rotationally symmetric exterior regions in a Schwarzschild space (which includes Euclidean space in the special case $m=0$). The first two examples below determine the potential ``$u$'' guaranteed by Theorem \ref{thm_main} for these spaces. Although Example 1 (Euclidean space) is a particular case of Example 2 (Schwarzschild space), we include it separately because the computations are simpler.

\medskip
\paragraph{\emph{Example 1:}}
Let $(M,g)$ be the complement of an open ball in Euclidean space of radius $r_0>0$. We have capacitary potential
$$\varphi(x) = 1-\frac{r_0}{r},$$
where $r=|x|$. 
Direct calculation gives
\begin{align*}
-d\varphi \otimes d\varphi +\frac{1}{2} |\nabla \varphi|^2 g 
&= -\frac{1}{2}r_0^2r^{-4} dr \otimes dr +\frac{1}{2} r_0^2r^{-2} d\sigma^2,
\end{align*}
where $d\sigma^2$ is the standard metric on the unit 2-sphere. Assuming $u=u(r)$, we have 
\begin{align*}
\Hess(u) &= u''(r) dr^2 + ru'(r) d\sigma^2\\
\Delta u &= u''(r) + \frac{2}{r} u'(r).
\end{align*}
Then:
\begin{align*}
L_g^*u &=\Hess(u) - (\Delta u)g -u\Ric 
= - \frac{2}{r} u'(r) dr^2  - \left(ru'(r)+u''(r)r^2\right) d\sigma^2.
\end{align*}
It is easy to verify that $u=-\frac{r_0^2}{8r^2}$ solves $L_g^*u=-d\varphi \otimes d\varphi +\frac{1}{2} |\nabla \varphi|^2 g$. The kernel of $L_g^*$ is four-dimensional, spanned by 1 and the coordinate functions. Therefore the general solution is $u$ plus any affine function. In particular, $u$ is the unique solution that decays to zero at infinity.
\medskip

\paragraph{\emph{Example 2:}}
In this example, let $m \in \R$ and choose $r_0 \geq \frac{m}{2}$ if $m>0$ or $r_0 > \frac{|m|}{2}$ if $m \leq 0$. Let $M = \R^3 \setminus B(0, r_0)$ be equipped with the Schwarzschild metric of mass $m$:
$$g_{ij}= \left(1+ \frac{m}{2r}\right)^4 \delta_{ij},$$
where $r=|x|$.
The capacitary potential is given by
$$\varphi = \frac{1-\frac{r_0}{r}}{1+\frac{m}{2r}}.$$
Direct computation shows:
$$-d\varphi \otimes d\varphi +\frac{1}{2} |\nabla \varphi|^2 g = \frac{(r_0+\frac{m}{2})^2}{2r^4\left(1+\frac{m}{2r}\right)^4 }\left( - dr^2 + r^2 d\sigma^2 \right).$$

To obtain the expression for $L_g^*$, we use the fact that $g$ is conformally flat. For $g_{ij}=w^4 \delta_{ij}$ with $w=w(r)$, we have for a function $u=u(r)$,
\begin{align*}
\Hess_g(u) &= (u'' - 2w^{-1}w'u')dr^2 + (r^{-1}u'+2w^{-1}w'u')r^2d\sigma^2\\
\Delta_g(u) &= w^{-4} \left(u'' + \frac{2}{r}u' + 2w^{-1} w'u' \right)\\
\Ric_g &=-(4w^{-1}w''+4r^{-1}w^{-1}w' -4w^{-2} (w')^2 )dr^2 \\
&\qquad- (2w^{-1}w'' + 6r^{-1}w^{-1} w' + 2w^{-2}(w')^2)r^2d\sigma^2.
\end{align*}
Then
\begin{align*}
L_g^*u &= \Big(-4w^{-1}w'u' - 2r^{-1}u' + u\left(4w^{-1}w'' + 4r^{-1}w^{-1}w'-4w^{-2}(w')^2\right) \Big)dr^2\\ 
&\qquad +\Big(-u''-r^{-1}u' +u\left(2w^{-1}w'' + 6r^{-1}w^{-1}w'+2w^{-2}(w')^2\right) \Big)r^2d\sigma^2.
\end{align*}
Using $w= 1+\frac{m}{2r}$, the harmonic-static equation $L_g^*u = -d\varphi \otimes d\varphi +\frac{1}{2} |\nabla \varphi|^2 g$ reduces to an ODE system for a radial solution $u(r)$ whose general solution (with assistance from Mathematica) is:
$$u(r) = -\frac{(m+2r_0)^2}{32r^2\left(1+\frac{m}{2r}\right)^2} + C \left(\frac{1-\frac{m}{2r}}{1+\frac{m}{2r}}\right)$$
for a constant $C$. Again, the first part is the unique solution that decays to zero at infinity, and the second part comes from the kernel of $L_g^*$ (which includes the static potential $\frac{1-\frac{m}{2r}}{1+\frac{m}{2r}}$).  

\medskip
\paragraph{\emph{Example 3:}} On Euclidean $\R^3$, there are choices of harmonic function other than constants and $\varphi = 1-\frac{c}{r}$ (as in Example 1) that give rise to a solution $u$ to the harmonic-static equation. For example, if $\varphi = z$, then one can check that $u = \frac{1}{8}\left(x^2+y^2 - 3z^2 \right)$ is a solution to $L_g^*u=-d\varphi \otimes d\varphi +\frac{1}{2} |\nabla \varphi|^2 g$ when $g$ is the Euclidean metric. 
(However, there is no reason to expect the Euclidean metric to be harmonic-static with respect to most harmonic functions.)

\medskip
\paragraph{\emph{Example 4:}} Consider the unit 3-sphere with opposite poles removed, i.e.
$$g = dr^2 + \cos(r)^2 d\sigma^2$$
for $r \in \left(-\frac{\pi}{2}, \frac{\pi}{2}\right)$.
With similar calculations as in Example 1, one can check that $\varphi(r) = \tan(r)$ is $g$-harmonic and that $g$ is harmonic-static with respect to $\varphi$. Explicit calculation of $L_g^*u$ with $u=u(r)$ and of $-d\varphi \otimes d\varphi +\frac{1}{2} |\nabla \varphi|^2 g$ yields the following ODEs for the $dr^2$ and $d\sigma^2$ components, respectively:
\begin{align*}
\tan(r)u' - u &= -\frac{1}{4} \sec^4(r)\\
-u''+\tan(r)u'-2u &= \frac{1}{2} \sec^4(r).
\end{align*}
A common solution can be obtained explicitly, with help from Mathematica, as:
$$u(r) =C\sin(r)+  \frac{1}{8}\left(3- \sec^2(r)\right)+\frac{3}{8}\sin(r)\left( \log (\cos(r)) - \log(1+\sin(r))\right).$$
which goes to $-\infty$ at the poles $r=\pm \frac{\pi}{2}$. The $C\sin(r)$ term corresponds to the part of the 4D kernel of $L_g^*$ that can be written in terms of $r$.

\section{Discussion and questions}
\label{sec_discussion}

\subsection{Quasi-local capacity}
Speculatively, we note that whereas Bartnik's definition $m_B(\Sigma, \gamma,H)$ is interpreted as a quasi-local \emph{mass} (along with the Hawking mass \cite{Haw}, Brown--York mass \cite{BY}, Wang--Yau mass \cite{WY}, and many others), we suggest that $\C(\Sigma, \gamma,H)$ could be viewed as a ``quasi-local capacity,'' which as far as we know is a new concept\footnote{Bartnik originally drew a comparison between his mass and the classical electrostatic capacity on $\R^3$, describing the former as a ``nonlinear geometric capacity'' \cite{Ba1}. Occasionally in the literature Bartnik's mass will be be referred to as a capacity, e.g. \cite{HI}. We avoid this here to reduce confusion.}. In the way that quasi-local mass attempts to define an inherently global quantity (total mass) on a finite region, quasi-local capacity could serve the same role for capacity.

Are there other quasi-local capacities? The right-hand side of Bray and Miao's estimate \eqref{eqn_upper_bound}, i.e.,
\begin{equation}
\label{qlc_bm}
\sqrt{\frac{|\Sigma|_\gamma}{16\pi}}\left(1+\sqrt{\frac{ 1}{16\pi}\int_{\Sigma} H^2 dA } \right)
\end{equation}
 seems to be a candidate, as it depends only the Bartnik data and produces the ``correct'' capacity for rotationally symmetric regions bounding the horizon in Schwarzschild space. Note the analogy: the Bartnik mass $m_B$ is to the maximum capacity $\C$ as the 
Hawking mass
$$\sqrt{\frac{|\Sigma|_\gamma}{16\pi}}\left(1- \frac{ 1}{16\pi}\int_{\Sigma} H^2 dA  \right)$$
 is to \eqref{qlc_bm}.
 
In general, just as Bartnik formulated a list of desirable properties for a quasi-local mass \cite{Ba1}, one can readily list analogous properties for quasi-local capacity ``$c$.'' These would include positivity $c(\Omega) > 0$ and monotonicity ($\Omega_1 \subseteq \Omega_2$ implies $c(\Omega_1) \leq c(\Omega_2)$) but exclude rigidity (as $c(\Omega)$ should never vanish). One difference between mass and capacity is the behavior at infinity: quasi-local masses should limit to a finite value (the total mass), but quasi-local capacity should grow without bound. More precisely, if $B_r$ is a coordinate ball of large radius $r$, then $\lim_{r \to \infty} \frac{c(B_r)}{r}$ should equal $1$. This property holds for \eqref{qlc_bm}, and we suspect it should hold for $\C$.

\subsection{Upper semicontinuity of capacity}
As described in the introduction, the Bartnik problem seeks to minimize the ADM mass. This inspired investigations into the lower semicontinuity of ADM mass \cites{J_LSC, J_LSC2, JL1,JL2}. In the context of this paper, we are maximizing capacity, so it is natural to wonder about whether capacity as a functional on the underlying metric satisfies \emph{upper} semicontinuity. Theorems along these lines were recently established in \cite{JPP} for a general class of metric spaces (local integral current spaces) under a relatively weak notion of convergence: pointed Sormani--Wenger intrinsic flat convergence \cite{SW}.

\subsection{Questions and conjectures}
We have left a number of questions unanswered and describe some of them here. First, if $(\Sigma, \gamma, H)$ is the Bartnik data for a round sphere in Euclidean space, then we know that the complement of a Euclidean ball is a maximal capacity extension (Proposition \ref{prop_round}). Additionally, 
for a bounded (and, for simplicity, say) convex domain in Euclidean space, its complement is a minimal mass extension (in the sense of the Barntik mass), which follows from the positive mass theorem with corners \cite{Miao_corners,ST}. Based on these observations, we ask:
if $(\Sigma, \gamma, H)$ is the Bartnik data arising from the boundary of a smooth open convex region $\Omega$ in $\R^3$, is $\R^3 \setminus \Omega$ (with the Euclidean metric) is a maximal capacity extension of $(\Sigma, \gamma, H)$? In light of Theorem \ref{thm_main}, it would be necessary for $\R^3 \setminus \Omega$ to be harmonic-static. Even this is not clear and may be false: as indicated in Example 3 in section \ref{sec_examples}, there is no reason to expect Euclidean space to be harmonic-static with respect to arbitrary harmonic functions. Related to this, it would be interesting to find examples of harmonic-static metrics that are not static.

Next, from the point of view of maximizing capacity, it would intuitively not be optimal for an optimizer of $\C$ to possess a horizon that enclosed $\partial M$, or any surface of less area enclosing $\partial M$ --- it would be more efficient for the admissible extension to ``open up'' more rapidly. Hence:
\begin{conj}
Let $(M,g)$ be a maximal capacity extension of $(\Sigma, \gamma, H)$. Then $M$ has no minimal surfaces in its interior that enclose $\partial M$, and $\partial M$ is outward-minimizing.
\end{conj}

The exact value of the Bartnik mass is known is very few cases, but upper bounds have been found by constructing admissible extensions with controlled ADM mass (for example, in \cites{CCMM, MWX, CM, MP}). It would be interesting to find corresponding lower bounds on $\C$ by constructing admissible extensions and estimating their capacity.  It may also  be possible to compute $\C$ for horizons using something along the lines of the Mantoulidis--Schoen \cite{MS} construction refined in \cite{CCMM}. A natural question is:
\begin{question}
\label{q_horizon}
Suppose  $\Sigma$ is a topological 2-sphere with a Riemannian metric $\gamma$ such that $\lambda_1(-\Delta_\gamma + K)>0$. Is $\C(\Sigma, \gamma, 0)$ (the horizon case) equal to $\sqrt{\frac{|\Sigma|_{\gamma}}{16\pi}}$?
\end{question}
The inequality $\C(\Sigma, \gamma, 0) \leq \sqrt{\frac{|\Sigma|_{\gamma}}{16\pi}}$ follows from Proposition \ref{prop_C_upper_bound}. Proving the opposite inequality would entail producing admissible extensions of $(\Sigma, \gamma,H)$ whose capacity is arbitrarily close to $\sqrt{\frac{|\Sigma|_{\gamma}}{16\pi}}$, and it is not clear this is possible. For example, it is conceivable that the deficit $\sqrt{\frac{|\Sigma|_{\gamma}}{16\pi}} - \C(\Sigma, \gamma, 0)$ is a function of how far $\gamma$ is from roundness. In section \ref{app_minimizing} we recall some details of Mantouldis--Schoen construction, though we use it in a way that is far from optimal from the point of view of Question \ref{q_horizon}.

\section{Appendix}

\subsection{Necessity of hypotheses}
In this first part of the appendix, we point out that both nonnegativity of scalar curvature and the imposition of the mean curvature boundary condition are necessary to obtain a meaningful maximum capacity $\C$.

\begin{prop}
\label{prop_hypotheses}
If either the boundary mean curvature condition $H \geq H_{\partial M}$ or the nonnegativity  of scalar curvature assumption were dropped in the definition of admissible extension in Definition \ref{def_extension}, then $\C(\Sigma,\gamma,H)$ would only take on the values of $\pm \infty$.
\end{prop}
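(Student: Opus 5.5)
The proof splits into two cases according to which hypothesis is dropped. In each case, if no extension of the relaxed type exists, then $\C=-\infty$. Otherwise we start from one such extension $(M,g)$ and build a family of extensions of the relaxed type whose capacities are unbounded, which forces $\C=+\infty$. The boundary metric $\gamma$ must stay fixed throughout. We therefore use conformal changes $u^4 g$ with $u=1$ on $\partial M$, and possibly also deformations supported away from $\partial M$.

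\emph{Dropping the mean curvature condition.} This mirrors the proof that $H>H_{\partial M}$ is impossible. Let $\varphi$ be the capacitary potential of $(M,g)$, and for $c>1$ set $u_c=1+(c-1)\varphi$ and $g_c=u_c^4g$. Since $u_c$ is $g$-harmonic and positive, $R_{g_c}=u_c^{-5}(-8\Delta_g u_c+R_g u_c)=u_c^{-4}R_g\ge 0$. Since $u_c=1$ on $\partial M$, the induced boundary metric is still $\gamma$, and $g_c$ is AF. Its capacitary potential is $\varphi_c=u_c^{-1}c\varphi$: by conformal covariance of the conformal Laplacian, this function is $g_c$-harmonic because $R$ transforms appropriately. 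The cleaner route is to compute directly that $\capac$ scales like the asymptotic constant $c^2$ of the metric at infinity. In $\R^3$ coordinates, $g_c\approx c^4\delta$ near infinity, so after rescaling coordinates by $c^2$ the capacity is of order $c^2\capac_g(\partial M)$, or at least tends to infinity. A more robust lower bound is the Dirichlet-energy formula: for any test function $\psi$, $|d\psi|^2_{g_c}dV_{g_c}=u_c^2|d\psi|_g^2dV_g\ge |d\psi|_g^2 dV_g$, which already shows monotonicity. To get unboundedness, I would use $\psi=\varphi_c$ together with the flux formula \eqref{eqn_cap}. The capacity equals $\frac1{4\pi}\int_{\partial M}\partial_{\nu}\varphi_c\,dA$, and $\partial_\nu$ of the explicit $g_c$-harmonic function $\varphi/u_c$ (suitably normalized to tend to $1$) grows linearly in $c$. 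Only the mean curvature $H_{\partial M}+4(c-1)\partial_\nu\varphi$ is uncontrolled, and that is exactly the dropped condition. Hence $\C=+\infty$.

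\emph{Dropping $R_g\ge0$.} Keep $g$ fixed near $\partial M$, so that $\gamma$ and $H_{\partial M}$ are unchanged, and enlarge the capacity by a conformal factor $u=1+t\rho$, where $\rho\ge0$ is a bump supported in a compact set $U$ of the interior. As in Proposition \ref{prop_ZSC}, $\capac_{g_t}\ge \frac1{4\pi}\int_M(1+t\rho)^2|d\varphi_t|_g^2dV_g$. The main obstacle is a uniform lower bound: $\varphi_t$ might concentrate its gradient away from $U$ as $t\to\infty$. Instead I would take $\rho$ equal to $1$ on a whole exterior region $\{|x|\ge r_1\}$, so that $u\equiv 1+t$ there. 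Then $g_t=(1+t)^4g$ near infinity, and in rescaled coordinates $y=(1+t)^2x$ it is AF. With a cutoff transition in an annulus, the capacity is bounded below by $(1+t)^2$ times the capacity of the region $\{|x|\ge r_1\}$ with respect to $g$. This follows by restricting test functions: any admissible $\psi$ for $g_t$, restricted to $\{|x|\ge r_1\}$, is admissible after an inf-over-values comparison. Precisely, capacity in $M$ dominates the capacity of the shell problem with $\psi\ge0$ on $S_{r_1}$, which is positive times $(1+t)^2$. The scalar curvature becomes negative in the transition annulus, but this is allowed. Letting $t\to\infty$ gives $\C=+\infty$.

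I expect the main care to be needed in the comparison step in each case, namely a clean lower bound for the energy on the region where the conformal factor is large. I would handle it uniformly by the monotone pointwise inequality $u^2|d\psi|^2_g\ge (\min_{\{|x|\ge r_1\}} u)^2|d\psi|_g^2$ on the exterior region, together with positivity of the exterior condenser capacity of $\{r_1\le|x|\}$ relative to infinity. In the first case $u_c\ge 1+(c-1)\min_{|x|\ge r_1}\varphi$ there, which grows linearly in $c$.
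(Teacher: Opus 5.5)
Your first case is correct and is the paper's argument. The function $c\varphi/u_c$ is the capacitary potential of $g_c$, $g_c=g$ along $\partial M$, and the flux formula gives $\capac_{g_c}(\partial M)=c\,\capac_g(\partial M)$. (Your $c^2$ scaling heuristic is wrong, but you don't use it.)

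The second case fails, because the capacity of your metrics stays bounded. Your $\rho$ vanishes on a fixed neighborhood $N$ of $\partial M$, independent of $t$. The energy density of $g_t=(1+t\rho)^4g$ is $(1+t\rho)^2|d\psi|_g^2\,dV_g$. So any Lipschitz $\psi$ vanishing on $\partial M$ and equal to $1$ outside a compact subset of $N$ has energy independent of $t$. For instance, take $\psi=\min(\varphi/s,1)$ with $s>0$ small enough that $\{\varphi\le s\}\subset N$. This gives $\capac_{g_t}(\partial M)\le s^{-1}\capac_g(\partial M)$ for every $t$. This is exactly the concentration you worried about: the potential does all of its rising inside the untouched collar.

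The step that breaks is your lower bound. The ``shell problem'' with only $\psi\ge 0$ on $S_{r_1}$ has infimum zero (take $\psi\equiv 1$). Restricting a test function to $\{|x|\ge r_1\}$ gives no control on how close to $1$ it already is on $S_{r_1}$. The same objection defeats the ``uniform'' argument in your last paragraph, even in the first case. There $c\varphi/u_c\to 1$ on $S_{r_1}$ as $c\to\infty$, and the exterior energy stays $O(1)$. The linear growth comes from a boundary layer of width of order $1/c$, so rely on the flux computation there.

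The missing idea is that the collar on which $g$ is left unchanged must shrink as the target capacity grows. The paper first fixes $c$ with $c\,\capac_g(\partial M)>\Gamma$; the identity $\capac_{u_c^4g}(\partial M)=c\,\capac_g(\partial M)$ does not use $R_g\ge 0$. It then sets $v_\epsilon=\rho_\epsilon+(1-\rho_\epsilon)u_c$, where $0\le\rho_\epsilon\le 1$ equals $1$ near $\partial M$ and is supported in the $\epsilon$-neighborhood of $\partial M$. Then $v_\epsilon^4g=g$ near $\partial M$, so $\gamma$ and $H_{\partial M}$ are unchanged. Since $u_c$ is continuous with $u_c=1$ on $\partial M$, $v_\epsilon\to u_c$ uniformly as $\epsilon\to 0$. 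Comparing energy densities pointwise then gives $\capac_{v_\epsilon^4g}(\partial M)\ge(\inf_M v_\epsilon/u_c)^2\,c\,\capac_g(\partial M)>\Gamma$ for $\epsilon$ small.
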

\begin{proof}
With $(\Sigma,\gamma,H)$ given, suppose that an AF extension $(M,g)$ with $R_g \geq 0$ exists (with no restriction on the mean curvature). (If this is not the case, then $\C(\Sigma,\gamma,H)$ as modified would simply be $-\infty$.) We claim that  AF extensions with $R_g \geq 0$ and arbitrarily large capacity exist.

To see this, let $\varphi$ be the capacitary potential of $(M,g)$. For a parameter $c>0$, let $u_c = 1 + (c-1)\varphi$. Thus, $\Delta_g u_c = 0$, $u_c=1$ on $\partial M$, and $u_c \to c$ at infinity. By the maximum principle, $u_c>0$. The conformal metric $\ol g_c = u_c^4 g$ is AF, and its boundary is isometric to $(\Sigma, \gamma)$. Since $u_c$ is harmonic and $R_g \geq 0$, it follows, that $\ol g_c$ likewise has nonnegative scalar curvature.

We now compute the capacity of $\partial M$ in $(M, \ol g_c)$. Using equation (239) of \cite{Bray_RPI}, one can see that $\frac{\varphi}{u_c}$  is harmonic with respect to $\ol g_c$. In particular,  $\frac{c\varphi}{u_c}$ is $\ol g_c$-harmonic with boundary conditions of 0 and 1 on $\partial M$ and at infinity and is therefore the capacitary potential of $(M, \ol g_c)$. We have (using the fact that $g=\ol g_c$ on $\partial M$):
\begin{align*}
\capac_{\ol g_c}(\partial M) &= \frac{1}{4\pi} \int_{\partial M} \partial_{\nu} \left(\frac{c\varphi}{u_c} \right) dA\\
&= \frac{c}{4\pi} \int_{\partial M} \partial_{\nu}(\varphi) dA\\
&=c \capac_g(\partial M).
\end{align*}
Since $c$ can be arbitrarily large, so can the capacity of the extension if the mean curvature condition is dropped. (Indeed, the mean curvature of $\partial M$ with respect to $\ol g_c$ also becomes arbitrarily large.)

Next, again with $(\Sigma,\gamma,H)$ given, suppose that an AF extension $(M,g)$ with ``$H \geq H_{\partial M}$'' exists (but where $R_g$ is not necessarily nonnegative).
(If this is not the case, then $\C(\Sigma,\gamma,H)$ as modified would simply be $-\infty$.) We claim that AF extensions that respect ``$H \geq H_{\partial M}$'' and have arbitrarily large capacity exist.

To see this, let $\Gamma>0$ be large, and fix $c$ as  in the previous argument (and the corresponding $u_c$) so that $\capac_{\ol g_c}(\partial M) > \Gamma$. For any $\epsilon>0$ small, let $0 \leq \rho_\epsilon \leq 1$ be 
a smooth function on $M$ that equals $1$ in a neighborhood of $\partial M$ and is supported in an $\epsilon$-neighborhood of $\partial M$. Let 
$v_\epsilon = \rho_\epsilon + (1-\rho_\epsilon) u_c$. Since $u_c=1$ on $\partial M$, it is straightforward to verify that $v_\epsilon$ converges uniformly to $u_c$ as $\epsilon \searrow 0$ 
(and $v_\epsilon$ agrees with $u_c$ outside a compact set. From this is follows that  $\capac_{v_\epsilon ^4 g}(\partial M)$ converges to $\capac_{\ol g_c}(\partial M)$  as 
$\epsilon \searrow 0$ and thus exceeds $\Gamma$ for $\epsilon>0$ sufficiently small. Since $v_\epsilon=g$ on a neighborhood of $\partial M$, We see the mean curvature condition is 
respected for the extension $(M, v_\epsilon)$. This completes the proof.
\end{proof}

\medskip

\subsection{Minimizing capacity is trivial}
\label{app_minimizing}
As promised in the introduction, we demonstrate here that \emph{minimizing} the capacity of admissible extensions of fixed Bartnik data $(\Sigma, \gamma,H)$ is uninteresting: the result is always zero --- even when restricting to admissible extensions containing no interior minimal surfaces or for which the $\partial M$ strictly minimizes area in its homology class. We assume here the conditions of Proposition \ref{prop_ext_exist} (the ``Mantoulidis--Schoen'' conditions), i.e., $\Sigma$ is topologically $S^2$ and the lowest eigenvalue of  $-\Delta_\gamma + K$ is positive. The intuition here is that the presence of a long, nearly-cylindrical region enclosing the boundary will ensure the capacity is small.

Mantoulidis and Schoen construct in \cite{MS} a Riemannian metric on a collar neighborhood of $\partial M$, i.e. on $[0,1] \times S^2$ with $t=0$ corresponding to $\partial M$,
of the form
$$g= A^2 u(t, \cdot)^2 dt^2 + (1+\epsilon t^2)\gamma(t),$$
where $\gamma(t)$ is a smooth family of Riemannian metrics on $S^2$ with $\gamma(0)=\gamma$, and $t \mapsto u(t,x)>0$ is  smooth family of functions on $S^2$. Here $\epsilon>0$ and $A>0$ are constants, where $A$ can always be increased. The metric $g$ has a minimal surface at $t=0$, with the other spheres $\{t\} \times S^2$ having positive mean curvature for $t \in (0,1]$. Also, $g$ has positive scalar curvature. Their ultimate construction extends $g$ and modifies it on $M$ to produce an admissible extension of $(\Sigma, \gamma,H)$ --- but the modification only occurs for $t \in [1/2,1]$. Thus, we may estimate the capacity using a test function that is constant outside of $S^2 \times [0,1/2]$. We let $\psi$ be the Lipschitz function on $(M,g)$ that equals $2t$ for $t \in [0,1/2]$ and equals 1 elsewhere. Then:
\begin{align*}
\int_M |\nabla \psi|^2_g  dV_g &= \int_{0}^{1/2} \int_{S^2} \frac{4}{A^2 u(t,\cdot)^2} \cdot A u(t,\cdot)(1+\epsilon t^2) dA_{\gamma(t)} dt\\
&= \frac{4}{A} \int_{0}^{1/2} (1+\epsilon t^2) \int_{S^2} \frac{1}{ u(t,\cdot)} dA_{\gamma(t)} dt,
\end{align*}
which converges to $0$ as $A \to \infty$. This shows that capacity of such $(M,g)$ can be made arbitrarily small, yet there are no closed minimal surfaces in the interior, and every surface enclosing the boundary has strictly greater area.

\subsection{Equivalence of definitions}
Here we show the two versions of the maximum capacity employed in this paper (i.e., Definition \ref{def_C} and equation \eqref{eqn_C_tilde}) are equivalent.

\begin{lemma}
\label{lemma_regularity}
Let $\Sigma$ be a smooth, closed, connected, orientable surface embedded in $\R^3$, which we identify with the surface $\partial \Omega$ for a bounded domain $\Omega \subset \R^3$. Let
$$\C(\Sigma, \gamma, H) = \sup_{(M,g)} \left\{ \capac_g(\partial M) \; : \; (M,g) \text{ is an admissible AF extension of } (\Sigma, \gamma, H) \right\}$$
be as in the introduction, and let
$$\tilde \C(\Sigma, \gamma, H) = \sup_{g} \left\{ \capac_g(\partial \Omega) \; : \; g \in \M^{k,\alpha}_{-\tau}(\R^3 \setminus \Omega), R_g \geq 0, g|_\Sigma = \gamma, \text{ and } H \geq H_{\partial M} \right\}$$
be as in \eqref{eqn_C_tilde}. Then $\C(\Sigma, \gamma, H) = \tilde \C(\Sigma, \gamma, H)$ (and is, in particular, independent of $\tau \in (0,\frac{1}{2})$, $k\geq 2$, and $\alpha \in (0,1)$).
\end{lemma}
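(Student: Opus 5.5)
We will prove the two inequalities $\tilde \C \leq \C$ and $\C \leq \tilde \C$ separately.

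\textbf{The inequality $\tilde \C \leq \C$.} This direction is nearly immediate. Let $g \in \M^{k,\alpha}_{-\tau}(\R^3 \setminus \Omega)$ be admissible in the sense of \eqref{eqn_C_tilde}. Its components satisfy $g_{ij}-\delta_{ij} = O(|x|^{-\tau})$, with first derivatives $O(|x|^{-\tau-1})$ and second derivatives $O(|x|^{-\tau-2})$. So $g$ satisfies the decay conditions of Definition \ref{def_AF} with order $\tau$, using the identity chart outside a large ball. Integrability of $R_g$ needs a separate word, since $R_g$ is only $O(|x|^{-\tau-2})$. But $R_g \geq 0$, and the standard flux argument applies: $R_g$ equals the divergence of the mass integrand plus a term quadratic in $\partial g$. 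The quadratic term is integrable because $2\tau+2>3$. The flux term has bounded integrals over $S_r$ because $\tau \leq 1$. Hence the integral of $R_g$ over coordinate balls is bounded, and $R_g \in L^1$. (If one prefers to avoid this, one can instead impose $\tau>1/2$ together with the usual convention.) Thus $(\R^3\setminus\Omega, g)$ is an admissible AF extension, and $\tilde\C \le \C$.

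\textbf{The inequality $\C \leq \tilde\C$, and the main obstacle.} Let $(N,g)$ be an admissible AF extension. By Remark \ref{rmk_diffeo}, we may pull it back to a metric $h$ on $M=\R^3\setminus\Omega$ that has asymptotics of order $\tau_0\in(\frac12,1]$ in the standard chart, is still admissible, and has the same capacity. The obstacle is regularity. The metric $h$ is smooth, but its decay is only $C^2$-type: there is no control of a weighted H\"older seminorm of the second derivatives, and $\tau_0$ may be smaller than the desired $\tau$. So $h$ need not lie in $\M^{k,\alpha}_{-\tau}$. We will fix this by an approximation that preserves admissibility and changes the capacity only slightly.

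\textbf{Approximation step.} Fix $\epsilon>0$ and proceed as follows.
\begin{enumerate}[(i)]
\item First use the conformal trick from the proof of the Proposition following Proposition \ref{prop_ZSC}. Set $u_c = 1+(c-1)\varphi$ with $c>1$ close to $1$. Then $u_c^4 h$ has $R\ge0$, the same boundary metric, and capacity strictly larger than $\capac_h(\partial M)$. However, this raises the boundary mean curvature, so we skip it, or use it only to gain slack.
\item The main tool is the density result of Corvino/Schoen--Yau type: outside a large ball $B_\rho$, replace $h$ by a metric $\tilde h$ that is conformally flat near infinity, $\tilde h=w^4\delta$ with $w$ harmonic, so that $w = 1 + a/|x| + O(|x|^{-2})$. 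This can be done with $R_{\tilde h}\ge0$ by solving a conformal equation $\Delta_{\hat h} w - \frac18 R_{\hat h} w=0$ (with the appropriate sign handling). Here $\hat h$ is a cutoff interpolation between $h$ and $\delta$ on an annulus; its scalar curvature is small in $L^{3/2}$ and weighted norms, so $w$ is close to $1$. Take $\hat h = h$ on a neighborhood of $\partial M$, keep $w\equiv1$ near $\partial M$ by arranging Neumann/Dirichlet data, or accept $w$ close to $1$ in $C^1$ and absorb the small mean-curvature change by first using step (i) in reverse (a slight conformal shrinking with $c<1$, which lowers $H_{\partial M}$).
\item The resulting metric is smooth, equals $w^4\delta$ near infinity, and therefore lies in $\M^{k,\alpha}_{-\tau}$ for every $\tau<1$, $k$ and $\alpha$. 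It has $R\ge0$, boundary metric $\gamma$, and $H_{\partial M}\le H$.
\end{enumerate}

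\textbf{Capacity estimate.} The capacity changes by at most $\epsilon$ once $\rho$ is large and the metric perturbation is small. To see this, note that capacity is monotone under pointwise comparison of metrics: if $(1-\delta)h\le \tilde h\le(1+\delta)h$, then the Dirichlet energies of any test function compare up to factors $(1\pm\delta)^{1/2}$. Taking the supremum over extensions and letting $\epsilon\to0$ gives $\C\le\tilde\C$.

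I expect the main difficulty to be step (ii): doing the decay-improving modification while simultaneously keeping $R\ge0$ and the boundary conditions exact.
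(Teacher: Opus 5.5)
\textbf{The gap is in the direction $\tilde\C\le\C$.} You treat this direction as nearly immediate, but it is where the paper's proof does most of its work.

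Definition \ref{def_AF} requires a \emph{smooth} Riemannian manifold, whereas $g\in\M^{k,\alpha}_{-\tau}$ is only $C^{k,\alpha}$. So a test metric for $\tilde\C$ is not itself a test manifold for $\C$. It must be approximated by smooth metrics, and a naive mollification can destroy $R_g\ge0$, the inequality $H\ge H_{\partial M}$, and the exact boundary metric $\gamma$.

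The paper handles this by creating slack first:
\begin{enumerate}[(a)]
\item The conformal factor $1-\delta\varphi$ makes $H>H_{\partial M}$ strict. It keeps $R\ge0$ and $\gamma$, and scales the capacity by $1-\delta$.
\item A factor $u$ with $u=1$ on $\partial M$, $\Delta u=-f$, and $f>0$ small makes $R>0$ everywhere. The slight increase in boundary mean curvature is absorbed by the slack from (a).
\item Only then is $g$ replaced by a smooth metric inducing $\gamma$. It is suitably $C^2$-close, so the strict inequalities persist, and uniformly $C^0$-close, so the capacity barely changes.
\end{enumerate}

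Separately, your integrability argument is incorrect. Up to a bounded error, $\int_{B_r}R_g$ equals the flux $\int_{S_r}(\partial_ig_{ij}-\partial_jg_{ii})\frac{x^j}{r}\,dA$. This flux is only $O(r^{1-\tau})$, which is unbounded for $\tau<1$; the condition $\tau\le1$ does not help.

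Integrability can genuinely fail. Take $g=u^4\delta$ with $u=1+(1+|x|^2)^{-\tau/2}$. It belongs to $\M^{k,\alpha}_{-\tau}$, and $R_g=-8u^{-5}\Delta u>0$ with $R_g\sim 8\tau(1-\tau)|x|^{-\tau-2}\notin L^1$. The paper's proof does not address this point either, but it cannot be dismissed as you do.

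One remedy is to apply the Schoen--Yau cutoff construction first. It needs only $\|R^-\|_{L^{3/2}}$ small on the cutoff annulus, not $R\in L^1$. The metric becomes harmonically flat outside a compact set, so its scalar curvature has compact support. Then do the positivity and smoothing steps with $f$ compactly supported.

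\textbf{The direction $\C\le\tilde\C$.} Here your route agrees with the paper's. You make $H>H_{\partial M}$ strict with $c<1$ (that is, $u_c=1-\delta\varphi$). You then use the Schoen--Yau approximation, in Bray's form, by a harmonically flat metric with $R\ge0$ whose conformal factor is $1$ on $\partial M$ with small gradient there. Finally, harmonic flatness puts the metric in $\M^{k,\alpha}_{-\tau}$ for all $k$, $\alpha$, $\tau$.

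Three points in your sketch need correcting:
\begin{enumerate}[(a)]
\item The equation $\Delta_{\hat h}w-\frac18R_{\hat h}w=0$ uses all of $R_{\hat h}$. That equals $R_h$ on $B_\rho$ and is not small, so $w$ would not be close to $1$ and the capacity would not be preserved. Instead solve $8\Delta_{\hat h}w+R^-_{\hat h}w=0$, where $R^-_{\hat h}$ is supported in the annulus. This gives $R_{w^4\hat h}=w^{-4}R^+_{\hat h}\ge0$.
\item In general, $w\equiv1$ near $\partial M$ cannot be arranged through boundary data, by unique continuation. The $C^1$-closeness alternative you mention is the correct one.
\item Step (i) with $c>1$ moves the mean curvature the wrong way and should be dropped.
\end{enumerate}
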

\begin{proof}
On the one hand, let $(M,g)$ be a test Riemannian manifold for $\C(\Sigma, \gamma, H)$, with decay rate $\tau_0 \in (1/2,1]$ as in Definition \ref{def_AF}. We first claim that the mean curvature boundary condition $H \geq H_{\partial M}$  can be taken to be strict, without loss of generality. For $\delta \in (0,1)$, let $u_\delta>0$ be the harmonic function on $(M,g)$ that equals 1 on $\partial M$ and approaches $1-\delta$ at infinity. (Note $u_\delta$ can be written in terms of the capacitary potential as $u_\delta = 1 - \delta \varphi$.) Then the conformal deformation $(M,u_\delta^4 g)$ is also a test Riemannian manifold for $\C(\Sigma, \gamma,H)$: it has nonnegative scalar curvature because $u_\delta$ is $g$-harmonic, it is AF of order $\tau_0$, it induces boundary metric $\gamma$ on $\partial M$ --- and satisfies ``$H > H_{\partial M}$'' strictly. The latter can be seen since $u|_{\partial M} = 1$ and $\partial_{\nu} u_\delta < 0$ along $\partial M$ by the maximum principle (with unit normal $\nu$ pointing into $M$). Also from the maximum principle, $|u_\delta -1| < \delta$. From this it follows that $\capac_{u_\delta^4 g} (\partial M) \to \capac_g(\partial M)$ as $\delta  \to 0$. This establishes the claim. So we assume the mean curvature boundary condition holds strictly for $g$: $H > H_{\partial M}$.

Next, we claim that $g$ can be taken to have particularly nice asymptotics. Specifically, we appeal to a lemma of Schoen--Yau \cite{SY2}, stated in a convenient form in \cite[Lemma 1]{Bray_RPI}: $g$ can be perturbed to an AF metric $g_\epsilon$ of nonnegative scalar curvature that is \emph{harmonically flat at infinity} (explained below), where the perturbation changes the metric by less than $\epsilon$ in a (global) uniform sense. Thus, the capacity is continuous as $\epsilon \to 0$. Moreover, near the boundary, the construction is carried out by a conformal factor $w$ that equals 1 on $\partial M$ (and thus preserves the boundary metric), with $|\nabla w|$ converging to 0 on $\partial M$ as $\epsilon \to 0$. In particular, for $\epsilon>0$ sufficiently small, $H > H_{\partial M}$ holds with respect to $g_\epsilon$. Thus, $(M,g_\epsilon)$ is a test Riemannian manifold for $\C(\Sigma, \gamma, H)$ --- so we can without loss of generality assume $g$ is harmonically flat at infinity. This means that  $g$ equals $U^4 \delta_{ij}$ outside of a compact set, where $U$ is harmonic with respect to $\delta_{ij}$ and $U \to 1$ at infinity. Using a standard expansion into spherical harmonics, we see that $g$ is AF order 1, and moreover $|\partial_\lambda g_{ij}|$ is $O(|x|^{-1-|\lambda|})$ for any multi-index $\lambda$. In light of remark \ref{rmk_diffeo}, we can without loss of generality take $g$ to be defined on $\R^3 \setminus \Omega$ with the stated decay, i.e., $g \in \M^{k,\alpha}_{-\tau}(\R^3 \setminus \Omega)$ for any $\tau \in (\frac{1}{2},1)$, $k\geq 2$, and $\alpha \in (0,1)$. In particular, $g$ is a test metric for $\tilde \C(\Sigma, \gamma, H)$ --- and the capacity has changed arbitrarily little in the course of this construction.

On the other hand, let $g \in \M^{k,\alpha}_{-\tau}(M)$  (where $M=\R^3 \setminus \Omega$) be a test metric for $\tilde \C(\Sigma, \gamma, H)$ i.e., let $g$ have nonnegative scalar curvature, with the induced metric on $\partial M$  from $g$ isometric to $(\Sigma, \gamma)$, and with $H \geq H_{\partial M}$. We  apply a  similar argument initially: the same steps allow us to assume $H > H_{\partial M}$ without loss of generality. Using a conformal argument (specifically, replacing $g$ with $u^4g$, where $u = 1$ on $\partial M$, $u \to 1$ at infinity, and $\Delta u = -f$, where $f>0$ on $M$ is pointwise small with rapid decay at infinity), we can assume $g$ has strictly positive scalar curvature everywhere without loss of generality. Finally, since smooth functions are dense in the space of $C^2$ functions with the $C^2$ norm, we can replace $g$ with a smooth metric $g'$ (still equaling $\gamma$ on the boundary) with the following properties: 1) $g'$ has positive scalar curvature, 2) $g'$ is AF as in Definition \ref{def_AF} (i.e., with decay up to 2nd derivatives) of order $\tau$, 3) $H > H_{\partial M}$ with respect to $g'$ (since mean curvature only depends on first derivatives of the metric), and 4) $g'$ is uniformly $C^0$-close to $g$. Property 4) here ensures that the capacity of $\partial M$ with respect to $g'$ can be made arbitrarily close to $\capac_g(\partial M)$. In particular, $(M,g')$ is a test Riemannian manifold for $\C(\Sigma, \gamma, H)$.
\end{proof}

\subsection{Variation of capacity under a flow of surfaces}
Finally, we observe that the formula for the derivative of the capacity with respect to the metric in Proposition \ref{prop_Dcap} can be used to give a proof of the variation of the capacity for a flow of surfaces in an AF manifold. This is certainly already known for surfaces in $\R^n$ (see \cite{DMM}, for example).
\begin{prop}
 \label{prop_cap_varying_boundary}
Let $(M,g)$ be an asymptotically flat 3-manifold without boundary, and let $\{\Sigma_t\}_{t \in [0,\epsilon)}$ be a smooth family of closed surfaces in $M$ such that $\Sigma_t = \partial \Omega_t$ for open bounded regions $\Omega_t$ with $M_t :=  M \setminus \Omega_t$ connected. Then:
\begin{equation}
\label{eqn_surfaces}
\frac{d}{dt} \left.\capac_g(\Sigma_t)\right|_{t=0} = \frac{1}{4\pi} \int_{\Sigma_0} |\nabla \varphi|^2 \langle X, \nu \rangle dA,
\end{equation}
where $\varphi$ is the capacitary potential for $M_0$, $X$ is the smooth vector field on $\Sigma_0$ (taking values in $TM$) defined by the flow of $\Sigma_t$ at $t=0$, and $\nu$ is the unit normal to $\Sigma_0$ pointing out of $\Omega_0$ (with all metric quantities taken with respect to $g$).
\end{prop}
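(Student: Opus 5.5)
The strategy is to pull the moving surfaces back to the fixed surface $\Sigma_0$ by a family of diffeomorphisms. The moving-boundary problem then becomes a fixed-boundary problem with a varying metric, so Proposition \ref{prop_Dcap} applies directly.

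\textbf{Step 1 (reduction).} Extend $X$ to a smooth, compactly supported vector field on $M$, still denoted $X$, and let $\Phi_t$ be its flow. Up to a reparametrization of the family by tangential motions, which does not change $\Sigma_t$ as a set to first order, we have $\Sigma_t=\Phi_t(\Sigma_0)$ and $M_t=\Phi_t(M_0)$. Since $\Phi_t$ is an isometry from $(M_0,\Phi_t^*g)$ to $(M_t,g)$, we get
\[
\capac_g(\Sigma_t)=\capac_{g_t}(\partial M_0), \qquad g_t:=\Phi_t^*g .
\]
Because $X$ has compact support, $g_t$ agrees with $g$ outside a compact set. Hence $g_t$ is a smooth path in $\M^{k,\alpha}(M_0)$ with $h=\frac{d}{dt}g_t|_{t=0}=\mathcal{L}_Xg$.

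\textbf{Step 2 (apply the gradient formula).} By Proposition \ref{prop_Dcap},
\[
4\pi\frac{d}{dt}\capac_g(\Sigma_t)\Big|_{t=0}=\int_{M_0}\langle S,\mathcal{L}_Xg\rangle\,dV, \qquad S=-d\varphi\otimes d\varphi+\tfrac12|\nabla\varphi|^2g .
\]
Since $S$ is symmetric, $\langle S,\mathcal{L}_Xg\rangle=2\langle S,\nabla X\rangle=2\Div(S(X,\cdot))-2(\Div S)(X)$. By Remark \ref{rmk_stress}, $\Div S=0$ because $\varphi$ is harmonic. The divergence theorem, with no contribution at infinity since $X$ has compact support, then gives
\[
4\pi\frac{d}{dt}\capac_g(\Sigma_t)\Big|_{t=0}=-2\int_{\Sigma_0}S(X,n)\,dA ,
\]
where $n$ is the normal pointing into $M_0$, so that $n=\nu$ and the boundary term carries a minus sign because $n$ is the inward normal of the region $M_0$.

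\textbf{Step 3 (boundary evaluation).} On $\Sigma_0$ we have $\varphi=0$, so $d\varphi=|\nabla\varphi|\,\nu^\flat$ with $\partial_\nu\varphi>0$. Therefore
\[
S(X,\nu)=-|\nabla\varphi|^2\langle X,\nu\rangle+\tfrac12|\nabla\varphi|^2\langle X,\nu\rangle=-\tfrac12|\nabla\varphi|^2\langle X,\nu\rangle .
\]
Substituting this into Step 2 gives exactly \eqref{eqn_surfaces}. As a sanity check, moving the surface outward ($\langle X,\nu\rangle>0$) should increase capacity, and the formula gives a positive derivative in that case.

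\textbf{Main obstacle.} The only delicate point is the sign and orientation bookkeeping in Steps 2 and 3, together with justifying the divergence identity $\langle S,\mathcal{L}_Xg\rangle=2\Div(S(X))$. The latter requires $\varphi$ to be smooth up to $\Sigma_0$, which holds by elliptic regularity. The reduction in Step 1 is routine, since a compactly supported deformation trivially preserves the weighted decay conditions.
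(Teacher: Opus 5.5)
Your proposal is correct and follows essentially the same route as the paper: pull back by the flow of a compactly supported extension of $X$, apply Proposition \ref{prop_Dcap} with $h=\mathcal{L}_Xg$, integrate by parts using that $-d\varphi\otimes d\varphi+\frac12|\nabla\varphi|^2g$ is divergence-free, and evaluate the boundary term via $\nabla\varphi=|\nabla\varphi|\nu$. Your signs and orientation conventions agree with the paper's.

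One small imprecision: the flow of an autonomous field gives $\Phi_t(\Sigma_0)=\Sigma_t$ only to first order, and a tangential reparametrization does not repair the second-order discrepancy. To fix this, either use a time-dependent isotopy extension $\Psi_t$ with $\Psi_t(\Sigma_0)=\Sigma_t$ exactly (its velocity at $t=0$ is still a compactly supported extension of $X$), or use, as the paper tacitly does, the fact that the derivative depends only on first-order data.
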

\begin{proof}
Extend $X$ to a smooth vector field on $M$ of compact support, and construct a 1-parameter family of diffeomorphisms $\Phi_t: M \to M$ as the flow of $X$ (so $\Phi_0$ is the identity and $\Phi_t$ is the identity outside of a compact set). To compute the desired derivative, it suffices to the compute the derivative of the capacity of $\Sigma_t' := \partial (M \setminus \Phi_t(\Omega_0))$. Note that
$$\capac_g(\Sigma_t') = \capac_{\Phi_t^* g}(\Sigma_0),$$
so the problem is further reduced to considering a fixed space with varying metric. We apply Proposition \ref{prop_Dcap} with $h = \L_X g$, the Lie derivative. The second line below can be verified using local coordinates, using the fact that $d\varphi \otimes d\varphi + \frac{1}{2}|\nabla \varphi|^2 g$ is divergence-free:
\begin{align*}
\frac{d}{dt} \left.\capac_{\Phi_t^* g}(\Omega_0)\right|_{t=0}&=\frac{1}{4\pi} \int_{M_0} \langle -d\varphi \otimes d\varphi + \frac{1}{2}|\nabla \varphi|^2 g, \L_X g\rangle dV\\
&=\frac{1}{2\pi} \int_{M_0} \Div\left[ (-d\varphi \otimes d\varphi + \frac{1}{2}|\nabla \varphi|^2 g)(X, \cdot) \right] dV\\
&=- \frac{1}{2\pi} \int_{\Sigma_0}  (-d\varphi \otimes d\varphi + \frac{1}{2}|\nabla \varphi|^2 g)(X,\nu) \ dV,
\end{align*}
where $\nu$ is the unit normal to $\Sigma$ with respect to $g$, pointing in $M$. Using $\nabla \varphi = |\nabla \varphi| \nu$ along $\Sigma_0$ reduces this to \eqref{eqn_surfaces}.
\end{proof}


\begin{bibdiv}
 \begin{biblist}
 
\bib{An}{article}{
   author={An, Z.},
   title={On mass-minimizing extensions of Bartnik boundary data},
   journal={Comm. Anal. Geom.},
   volume={31},
   date={2023},
   number={6},
   pages={1353--1403}
}
 
 \bib{AJ}{article}{
   author={Anderson, M.},
   author={Jauregui, J.},
   title={Embeddings, immersions and the Bartnik quasi-local mass
   conjectures},
   journal={Ann. Henri Poincar\'{e}},
   volume={20},
   date={2019},
   number={5},
   pages={1651--1698}
}

 \bib{adm}{article}{
   author={Arnowitt, R.},
   author={Deser, S.},
   author={Misner, C.},
   title={Coordinate invariance and energy expressions in general relativity},
   journal={Phys. Rev. (2)},
   volume={122},
   date={1961},
   pages={997--1006},
}

\bib{BE}{article}{
   author={Baird, P.},
   author={Eells, J.},
   title={A conservation law for harmonic maps},
   conference={
      title={Geometry Symposium, Utrecht 1980},
      address={Utrecht},
      date={1980},
   },
   book={
      series={Lecture Notes in Math.},
      volume={894},
      publisher={Springer, Berlin-New York},
   },
   date={1981},
   pages={1--25}
}

\bib{Ba0}{article}{
   author={Bartnik, R.},
   title={The mass of an asymptotically flat manifold},
   journal={Comm. Pure Appl. Math.},
   volume={39},
   date={1986},
   number={5},
   pages={661--693},
}

 
 \bib{Ba1}{article}{
   author={Bartnik, R.},
   title={New definition of quasilocal mass},
   journal={Phys. Rev. Lett.},
   volume={62},
   date={1989},
   number={20},
   pages={2346--2348}
}

\bib{Ba5}{article}{
   author={Bartnik, R.},
   title={Some open problems in mathematical relativity},
   conference={
      title={Conference on Mathematical Relativity},
      address={Canberra},
      date={1988},
   },
   book={
      series={Proc. Centre Math. Anal. Austral. Nat. Univ.},
      volume={19},
      publisher={Austral. Nat. Univ., Canberra},
   },
   date={1989},
   pages={244--268},
   review={\MR{1020805}},
}

\bib{Ba3}{article}{
   author={Bartnik, R.},
   title={Energy in general relativity},
   conference={
      title={Tsing Hua lectures on geometry \&\ analysis},
      address={Hsinchu},
      date={1990--1991},
   },
   book={
      publisher={Int. Press, Cambridge, MA},
   },
   date={1997},
   pages={5--27}
}

\bib{Ba4}{article}{
   author={Bartnik, R.},
   title={Mass and 3-metrics of non-negative scalar curvature},
   conference={
      title={Proceedings of the International Congress of Mathematicians,
      Vol. II},
   },
   book={
      publisher={Higher Ed. Press, Beijing},
   },
   date={2002},
   pages={231--240}
}

\bib{Ba_phase}{article}{
   author={Bartnik, R.},
   title={Phase space for the Einstein equations},
   journal={Comm. Anal. Geom.},
   volume={13},
   date={2005},
   number={5},
   pages={845--885},
   issn={1019-8385},
   review={\MR{2216143}},
}



 
 \bib{Bray_RPI}{article}{
   author={Bray, H.},
   title={Proof of the Riemannian Penrose inequality using the positive mass
   theorem},
   journal={J. Differential Geom.},
   volume={59},
   date={2001},
   number={2},
   pages={177--267}
}

 \bib{BM}{article}{
   author={Bray, H.},
   author={Miao, P.},
   title={On the capacity of surfaces in manifolds with nonnegative scalar
   curvature},
   journal={Invent. Math.},
   volume={172},
   date={2008},
   number={3},
   pages={459--475}
}

\bib{BY}{article}{
   author={Brown, J.},
   author={York, J.},
   title={Quasilocal energy and conserved charges derived from the
   gravitational action},
   journal={Phys. Rev. D (3)},
   volume={47},
   date={1993},
   number={4},
   pages={1407--1419}
}

\bib{CCMM}{article}{
   author={Cabrera Pacheco, A.},
   author={Cederbaum, C.},
   author={McCormick, S.},
   author={Miao, P.},
   title={Asymptotically flat extensions of CMC Bartnik data},
   journal={Classical Quantum Gravity},
   volume={34},
   date={2017},
   number={10},
   pages={105001, 15}
}
 
 
  \bib{CM}{article}{
   author={Chau, A.},
   author={Martens, A.},
   title={ On the Bartnik mass of non-negatively curved CMC spheres},
   eprint={https://arxiv.org/abs/2102.03632},
}

 
\bib{CLN}{book}{
   author={Chow, B.},
   author={Lu, P.},
   author={Ni, L.},
   title={Hamilton's Ricci flow},
   series={Graduate Studies in Mathematics},
   volume={77},
   publisher={American Mathematical Society, Providence, RI; Science Press
   Beijing, New York},
   date={2006}
}

\bib{Ch}{article}{
   author={Chru\'sciel, P.},
   title={Boundary conditions at spatial infinity from a Hamiltonian point
   of view},
   conference={
      title={Topological properties and global structure of space-time},
      address={Erice},
      date={1985},
   },
   book={
      series={NATO Adv. Sci. Inst. Ser. B Phys.},
      volume={138},
      publisher={Plenum, New York},
   },
   date={1986},
   pages={49--59}
}


\bib{Cor}{article}{
   author={Corvino, J.},
   title={Scalar curvature deformation and a gluing construction for the
   Einstein constraint equations},
   journal={Comm. Math. Phys.},
   volume={214},
   date={2000},
   number={1},
   pages={137--189}
}

\bib{Cor2}{article}{
   author={Corvino, J.},
   title={A note on the Bartnik mass},
   conference={
      title={Nonlinear analysis in geometry and applied mathematics},
   },
   book={
      series={Harv. Univ. Cent. Math. Sci. Appl. Ser. Math.},
      volume={1},
      publisher={Int. Press, Somerville, MA},
   },
   date={2017},
   pages={49--75}
}

\bib{DMM}{article}{
   author={De Philippis, G.},
   author={Marini, M.},
   author={Mukoseeva, E.},
   title={The sharp quantitative isocapacitary inequality},
   journal={Rev. Mat. Iberoam.},
   volume={37},
   date={2021},
   number={6},
   pages={2191--2228}
}

\bib{DM}{book}{
   author={Dr\'{a}bek, P.},
   author={Milota, J.},
   title={Methods of nonlinear analysis},
   series={Birkh\"{a}user Advanced Texts: Basler Lehrb\"{u}cher.
   [Birkh\"{a}user Advanced Texts: Basel Textbooks]},
   edition={2},
   note={Applications to differential equations},
   publisher={Birkh\"{a}user/Springer Basel AG, Basel},
   date={2013},
   pages={x+649}
}

\bib{FM}{article}{
   author={Fischer, A.},
   author={Marsden, J.},
   title={Deformations of the scalar curvature},
   journal={Duke Math. J.},
   volume={42},
   date={1975},
   number={3},
   pages={519--547}
}

\bib{Haw}{article}{
   author={Hawking, S.},
   title={Gravitational radiation in an expanding universe},
   journal={J. Mathematical Phys.},
   volume={9},
   date={1968},
   number={4},
   pages={598--604}
}

\bib{HI}{article}{
   author={Huisken, G.},
   author={Ilmanen, T.},
   title={The inverse mean curvature flow and the Riemannian Penrose
   inequality},
   journal={J. Differential Geom.},
   volume={59},
   date={2001},
   number={3},
   pages={353--437},
}

\bib{J_LSC}{article}{
   author={Jauregui, J.},
   title={On the lower semicontinuity of the ADM mass},
   journal={Comm. Anal. Geom.},
   volume={26},
   date={2018},
   number={1},
   pages={85--111}
}

\bib{J_LSC2}{article}{
   author={Jauregui, Jeffrey L.},
   title={Lower semicontinuity of the ADM mass in dimensions two through
   seven},
   journal={Pacific J. Math.},
   volume={301},
   date={2019},
   number={2},
   pages={441--466}
}

\bib{Jau}{article}{
   author={Jauregui, J.},
   title={Smoothing the Bartnik boundary conditions and other results on
   Bartnik's quasi-local mass},
   journal={J. Geom. Phys.},
   volume={136},
   date={2019},
   pages={228--243}
}

\bib{JL1}{article}{
   author={Jauregui, J.},
   author={Lee, D.},
   title={Lower semicontinuity of mass under $C^0$ convergence and Huisken's
   isoperimetric mass},
   journal={J. Reine Angew. Math.},
   volume={756},
   date={2019},
   pages={227--257}
}

\bib{JL2}{article}{
   author={Jauregui, J.},
   author={Lee, D.},
   title={Lower semicontinuity of ADM mass under intrinsic flat convergence},
   journal={Calc. Var. Partial Differential Equations},
   volume={60},
   date={2021},
   number={5},
   pages={Paper No. 193, 42}
}

\bib{JPP}{article}{
   author={Jauregui, J.},
   author={Perales, R.},
   author={Portegies, J.},
   title={Semicontinuity of capacity under pointed intrinsic flat
   convergence},
   journal={Comm. Anal. Geom.},
   volume={33},
   date={2025},
   number={3},
   pages={559--621}
}

\bib{MS}{article}{
   author={Mantoulidis, C.},
   author={Schoen, R.},
   title={On the Bartnik mass of apparent horizons},
   journal={Classical Quantum Gravity},
   volume={32},
   date={2015},
   number={20},
   pages={205002, 16}
}

\bib{MMT}{article}{
   author={Mantoulidis, C.},
   author={Miao, P.},
   author={Tam, L.-F.},
   title={Capacity, quasi-local mass, and singular fill-ins},  
   journal={J. Reine Angew. Math. (to appear)}
}

\bib{McC2}{article}{
   author={McCormick, S.},
   title={An overview of Bartnik’s quasi-local mass},
   journal={Beijing J. of Pure and Appl. Math.},
   volume={1},
   date={2024},
   number={2},
   pages={455--487}
}


\bib{McC}{article}{
   author={McCormick, S.},
   title={Gluing Bartnik extensions, continuity of the Bartnik mass, and the
   equivalence of definitions},
   journal={Pacific J. Math.},
   volume={304},
   date={2020},
   number={2},
   pages={629--653}
}


\bib{Miao_corners}{article}{
   author={Miao, P.},
   title={Positive mass theorem on manifolds admitting corners along a
   hypersurface},
   journal={Adv. Theor. Math. Phys.},
   volume={6},
   date={2002},
   number={6},
   pages={1163--1182}
}

\bib{Miao_boundary}{article}{
   author={Miao, P.},
   title={Variational effect of boundary mean curvature on ADM mass in
   general relativity},
   conference={
      title={Mathematical physics research on the leading edge},
   },
   book={
      publisher={Nova Sci. Publ., Hauppauge, NY},
   },
   date={2004},
   pages={145--171}
}

\bib{MP}{article}{
   author={Miao, P.},
   author={Piubello, A.},
   title={Estimates of the Bartnik mass},
   journal={Beijing J.Pure Appl.Math.},
   volume={1},
   date={2024},
   number={2},
   pages={489-513}
}


\bib{MWX}{article}{
   author={Miao, P.},
   author={Wang, Y.},
   author={Xie, N.},
   title={On Hawking mass and Bartnik mass of CMC surfaces},
   journal={Math. Res. Lett.},
   volume={27},
   date={2020},
   number={3},
   pages={855--885}
}

\bib{SY2}{article}{
   author={Schoen, R.},
   author={Yau, S.-T.},
   title={Proof of the positive mass theorem. II},
   journal={Comm. Math. Phys.},
   volume={79},
   date={1981},
   number={2},
   pages={231--260}
}


\bib{ST}{article}{
   author={Shi, Y.},
   author={Tam, L.-F.},
   title={Positive mass theorem and the boundary behaviors of compact
   manifolds with nonnegative scalar curvature},
   journal={J. Differential Geom.},
   volume={62},
   date={2002},
   number={1},
   pages={79--125}
}

\bib{WY}{article}{
   author={Wang, M.-T.},
   author={Yau, S.-T.},
   title={Isometric embeddings into the Minkowski space and new quasi-local
   mass},
   journal={Comm. Math. Phys.},
   volume={288},
   date={2009},
   number={3},
   pages={919--942}
}

\bib{SW}{article}{
   author={Sormani, C.},
   author={Wenger, S.},
   title={The intrinsic flat distance between Riemannian manifolds and other
   integral current spaces},
   journal={J. Differential Geom.},
   volume={87},
   date={2011},
   number={1},
   pages={117--199}
}



\end{biblist}
\end{bibdiv}
\end{document}